\documentclass[AMA,STIX1COL]{WileyNJD-v2}
\usepackage{moreverb}

\articletype{Article Type}%

\received{<day> <Month>, <year>}
\revised{<day> <Month>, <year>}
\accepted{<day> <Month>, <year>}


\usepackage{graphicx} 
\usepackage{epstopdf}

\usepackage{subcaption}
\usepackage{caption}
\captionsetup{font=normalsize,labelfont={bf,sf}}
\captionsetup[sub]{font=normalsize,labelfont={bf,sf}}

\usepackage{amsmath}

\usepackage{amssymb}
\usepackage{booktabs}
\usepackage{multirow}

\usepackage{hyperref}

\usepackage{enumitem}

\newcommand{\barr}{\begin{array}}
	\newcommand{\earr}{\end{array}}

\newcommand{\Z}{{\mathbb{Z}}}

\newcommand{\Ts}{T_{\mathrm{s}}}

\newcommand{\R}{{\mathbb{R}}}

\newcommand{\nx}{n_{\mathrm{x}}}
\newcommand{\nU}{n_{\mathrm{u}}}
\newcommand{\nc}{n_{\mathrm{c}}}

\newcommand{\z}{\boldsymbol{z}}
\renewcommand{\v}{\boldsymbol{v}}
\newcommand{\y}{\boldsymbol{y}}
\newcommand{\lam}{{\lambda}}

\newcommand{\PS}{\mathrm{PS}}

\newcommand{\diag}[1]{\text{diag}\left( {#1} \right)}

\newcommand{\fixed}{\mathcal{R}}

\newcommand{\wmax}{w_{\textrm{max}}}
\newcommand{\wmin}{w_{\textrm{min}}}
\newcommand{\inact}{{\textrm{in}}}
\newcommand{\act}{{\textrm{act}}}
\newcommand{\guess}{{\textrm{g}}}
\newcommand{\Iwinactive}{\mathcal{I}_{\textrm{in}}}
\newcommand{\Iwactive}{\mathcal{I}_{\textrm{act}}}
\newcommand{\Iwguess}{\mathcal{I}_{\textrm{g}}}

\newcommand{\Set}[2]{\{\, #1 \mid #2 \,\}}

\newcommand{\blo}{\ubar{c}}
\newcommand{\bup}{\bar{c}}

\newcommand{\px}{p^{\mathrm{x}}}
\newcommand{\py}{p^{\mathrm{y}}}
\newcommand{\vx}{v^{\mathrm{x}}}
\newcommand{\vy}{v^{\mathrm{y}}}
\newcommand{\ax}{a^{\mathrm{x}}}
\newcommand{\ay}{a^{\mathrm{y}}}
\newcommand{\goal}{g^{\mathrm{s}}}
\newcommand{\deltag}{\delta^{\mathrm{g}}}

\newcommand{\pc}{p^{\mathrm{c}}}
\newcommand{\vc}{v^{\mathrm{c}}}
\newcommand{\thetap}{\theta^{\mathrm{p}}}
\newcommand{\omegap}{\omega^{\mathrm{p}}}
\newcommand{\mc}{m^{\mathrm{c}}}
\renewcommand{\mp}{m^{\mathrm{p}}}
\newcommand{\Fc}{F^{\mathrm{in}}}
\newcommand{\Fl}{F^{\mathrm{l}}}
\newcommand{\Fr}{F^{\mathrm{r}}}

\newcommand{\pt}{p^{\mathrm{t}}}
\newcommand{\dist}{s}
\newcommand{\distl}{s^{\mathrm{l}}}
\newcommand{\distr}{s^{\mathrm{r}}}
\newcommand{\dotdistl}{\dot{s}^{\mathrm{l}}}
\newcommand{\dotdistr}{\dot{s}^{\mathrm{r}}}

\newcommand{\M}{M}
\newcommand{\xgoal}{x^{\mathrm{goal}}}
\newcommand{\pxmax}{p^{\mathrm{x},\mathrm{max}}}
\newcommand{\pxmin}{p^{\mathrm{x},\mathrm{min}}}
\newcommand{\pymax}{p^{\mathrm{y},\mathrm{max}}}
\newcommand{\pymin}{p^{\mathrm{y},\mathrm{min}}}

\newcommand{\bin}{\delta}
\newcommand{\I}{\mathcal{I}}
\renewcommand{\P}{\mathcal{P}}

\newcommand{\flag}{\texttt{flag}}

\newcommand{\True}{\texttt{True}}
\newcommand{\False}{\texttt{False}}

\theoremstyle{plain}

\newtheorem{Lemma}[theorem]{Lemma}

\theoremstyle{Definition}
\newtheorem{Definition}[theorem]{Definition}
\newtheorem{Example}[theorem]{Example}

\theoremstyle{Remark}
\newtheorem{Remark}{Remark}

\usepackage{accents}
\newcommand{\ubar}[1]{\underaccent{\bar}{#1}}

\usepackage{color}

\usepackage{algorithm}
\usepackage{algpseudocode}
\algrenewcommand\algorithmicrequire{\textbf{Input:}}
\algrenewcommand\algorithmicensure{\textbf{Output:}}

\algnewcommand{\IfThenElse}[3]{
	\State \algorithmicif\ #1\ \algorithmicthen\ #2\ \algorithmicelse\ #3}
\algnewcommand{\IfThen}[2]{
\State \algorithmicif\ #1\ \algorithmicthen\ #2}

\newcommand{\software}[1]{\texttt{#1}}


\usepackage{color}
\definecolor{wheat}{rgb}{0.96,0.87,0.70}

\newcommand{\change}[1]{#1}


\begin{document}
	
	\title{Tailored Presolve Techniques in Branch-and-Bound Method for Fast Mixed-Integer Optimal Control Applications
	\protect\thanks{Tailored Presolve Techniques in Branch-and-Bound Method for Fast Mixed-Integer Optimal Control Applications}}

	\author[1]{Rien Quirynen*}
	
	\author[1]{Stefano Di Cairano}
	
	\authormark{R. QUIRYNEN \textsc{et al}}

	\address[1]{\orgdiv{Control for Autonomy}, \orgname{Mitsubishi Electric Research Laboratories}, \orgaddress{\state{Massachusetts}, \country{USA}}}
	
	
	\corres{*\email{quirynen@merl.com}}
	
	\presentaddress{201 Broadway, 8th Floor, Cambridge, MA 02139-1955}
	
	\abstract[Abstract]{
		Mixed-integer model predictive control~(MI-MPC) can be a powerful tool for controlling hybrid systems. 
		In case of a linear-quadratic objective in combination with linear or piecewise-linear system dynamics and inequality constraints, MI-MPC needs to solve a mixed-integer quadratic program~(MIQP) at each sampling time step. 
		This paper presents a collection of exact block-sparse presolve techniques to efficiently remove decision variables, and to remove or tighten inequality constraints, tailored to mixed-integer optimal control problems. In addition, we describe a novel approach based on a heuristic presolve algorithm to compute a feasible but possibly suboptimal MIQP solution. We present benchmarking results for a C~code implementation of the proposed \software{BB-ASIPM} solver, including a branch-and-bound~(B\&B) method with the proposed tailored presolve techniques and an active-set based interior point method~(ASIPM), compared against multiple state-of-the-art MIQP solvers on a case study of motion planning with obstacle avoidance constraints. Finally, we demonstrate the feasibility and computational performance of the \software{BB-ASIPM} solver in embedded system on a dSPACE Scalexio real-time rapid prototyping unit for a second case study of stabilization for an underactuated cart-pole with soft contacts.
	}
	
	\keywords{Mixed-integer programming, Numerical optimization algorithms, Hybrid model predictive control}
	
	\jnlcitation{\cname{%
			\author{Quirynen R.}, and
			\author{Di Cairano S.}} (\cyear{2022}), 
		\ctitle{Tailored Presolve Techniques in Branch-and-Bound Method for Fast Mixed-Integer Optimal Control Applications}, \cjournal{Optimal Control Applications and Methods}, \cvol{2022;00:1--6}.}
	
	\maketitle
	
		

\section{Introduction}

Optimization-based motion planning and control techniques, such as model predictive control~(MPC), allow a model-based design framework in which the dynamics, constraints and objectives are directly taken into account~\cite{Mayne2013}. This framework has been extended to hybrid systems~\cite{bemporad1999control}, providing a powerful technique to model a large range of hybrid control problems, e.g., including switched dynamical systems~\cite{MarcucciTedrake2019,ChenCulbertsonEtAl2021}, discrete/quantized actuation~\cite{WalshWeissDiCairano16}, motion planning with obstacle avoidance~\cite{LandryDeitsEtAl2016}, logic rules and temporal logic specifications~\cite{SahinQuirynenEtAl2020}.
However, the resulting optimal control problems~(OCPs) are non-convex combinatorial optimization problems, because they contain variables that only take integer or binary values, and they are $\mathcal{NP}$-hard to solve in general~\cite{Pia2017}. When using a linear-quadratic objective in combination with linear or piecewise-linear system dynamics and inequality constraints, the resulting OCP can be formulated as a mixed-integer quadratic program~(MIQP).

In the present work, we aim to solve MIQP problems of the following form:
\begin{subequations} \label{eq:OCP-MIQP}
\begin{alignat}{5}
\hspace{-3mm}\underset{X,\,U}{\text{min}} \quad &\sum_{i=0}^{N} \frac{1}{2}\begin{bmatrix}	x_i \\ u_i	\end{bmatrix}^\top H_i \begin{bmatrix}	x_i \\ u_i	\end{bmatrix} + \begin{bmatrix}	x_i \\ u_i	\end{bmatrix}^\top \begin{bmatrix}	q_i \\ r_i	\end{bmatrix} && \label{OCP:obj}\\
\hspace{-3mm}\text{s.t.} \quad\; 
& x_{i+1} = \begin{bmatrix} A_i & B_i \end{bmatrix} \begin{bmatrix} x_i \\ u_i \end{bmatrix} + a_{i}, && \hspace{-2em} \forall i \in \{0,\ldots,N-1\}, \label{OCP:dyn}\\
& \begin{bmatrix} \ubar{x}_i \\ \ubar{u}_i \end{bmatrix} \leq \begin{bmatrix} x_i \\ u_i \end{bmatrix} \leq \begin{bmatrix} \bar{x}_i \\ \bar{u}_i \end{bmatrix}, \quad &&\hspace{-2em} \forall i \in \{0,\ldots,N\}, \label{OCP:bounds} \\
& \blo_{i} \leq \begin{bmatrix} C_i & D_i \end{bmatrix} \begin{bmatrix} x_i \\ u_i \end{bmatrix} \leq \bup_{i}, \quad &&\hspace{-2em} \forall i \in \{0,\ldots,N\}, \label{OCP:ineq} \\
& u_{i,j} \in \Z, \quad \forall j \in \I_i, && \hspace{-2em} \forall i \in \{0,\ldots,N\}, \label{OCP:int}
\end{alignat}
\end{subequations} 
where the state variables are $x_i \in \R^{\nx^i}$, the control variables are $u_i \in \R^{\nU^i}$ and $\I_i$ denotes the index set of integer decision variables, i.e., the cardinality $| \I_i | \le \nU^i$ denotes the number of integer variables at each time step $i \in \{0,1,\ldots,N\}$.
The objective in~\eqref{OCP:obj} defines a linear-quadratic function with positive semi-definite Hessian matrix $H_i \succeq 0$ and gradient vectors $q_i \in \R^{\nx^i}$ and $r_i \in \R^{\nU^i}$. The constraints include state dynamic equality constraints in~\eqref{OCP:dyn}, simple bounds in~\eqref{OCP:bounds}, affine inequality constraints in~\eqref{OCP:ineq} and integer feasibility constraints in~\eqref{OCP:int}. \change{The optimization problem in~\eqref{eq:OCP-MIQP} is an optimal control structured MIQP or a block-sparse MIQP due to the block-structured sparsity of the Hessian and constraint matrix, i.e., the objective in~\eqref{OCP:obj} and inequality constraints in~\eqref{OCP:bounds}-\eqref{OCP:ineq} are separable per time step $i \in \{0,1,\ldots,N\}$ but the variables are coupled via the dynamics in~\eqref{OCP:dyn}.} Note that an initial state constraint $x_0 = \hat{x}_0$, where $\hat{x}_0$ is a current state estimate, can be enforced using the simple bounds in~\eqref{OCP:bounds}.
A compact notation is used to denote the optimization variables of the MIQP in~\eqref{eq:OCP-MIQP} as the state $X=[x_0^\top,\ldots, x_N^\top]^\top$ and control trajectory $U=[u_0^\top,\ldots, u_{N}^\top]^\top$. In the present paper, we occasionally refer to all optimization variables in~\eqref{eq:OCP-MIQP} as $Z=[z_0^\top,\ldots, z_N^\top]^\top$, where $z_i = [x_i^\top, u_i^\top]^\top$ for $i \in \{0,1,\ldots,N\}$.
Unlike standard OCP formulations, the \change{MIQP} in~\eqref{eq:OCP-MIQP} \change{defines a mixed-integer OCP~(MIOCP) that} includes control variables on the terminal stage, $u_N \in \R^{\nU^N}$, which may include auxiliary variables to formulate the mixed-integer inequality constraints of the hybrid control system. A binary optimization variable $u_{i,j} \in \{0,1\}$ can be defined as an integer variable $u_{i,j} \in \Z$ in~\eqref{OCP:int}, including the simple bounds $0 \le u_{i,j} \le 1$ in~\eqref{OCP:bounds}. Without loss of generality and for simplicity of notation, the integer optimization variables in~\eqref{eq:OCP-MIQP} are restricted to control variables, even though the methods in this paper can be trivially extended to MIOCPs with integer state variables. 
MPC for any hybrid system can be formulated as in~\eqref{eq:OCP-MIQP}, for example, by leveraging a mixed logical dynamical~(MLD) model~\cite{bemporad1999control}. There are typically different ways to formulate a hybrid control problem in the form of the MIQP in~\eqref{eq:OCP-MIQP}, providing a tradeoff between the number of decision variables and strength of the formulation~\cite{nemhauser1988integer}. The strength of an MIQP corresponds to how close the convex relaxations are to the exact MIQP solution, e.g., see the formulations in~\cite{MarcucciTedrake2019} for piecewise-affine systems.

Mixed-integer MPC~(MI-MPC) implementations for motion planning and control aim to solve the MIOCP in~\eqref{eq:OCP-MIQP} at each sampling time step. This is challenging due to the $\mathcal{NP}$-hard complexity of solving MIQPs in general~\cite{Pia2017}, and given the relatively small computational resources and available memory on embedded microprocessors for real-time control applications~\cite{DiCairano2018tutorial}. Therefore, several tailored solution strategies have been proposed for MI-MPC. These approaches can \change{generally} be divided into \emph{heuristic} techniques, which seek to efficiently find feasible but suboptimal solutions to the problem, and \emph{exact} algorithms that solve the MIQPs to optimality. Examples of the former include rounding schemes~\cite{Kirches2010y,Sager2008}, the feasibility pump~\cite{achterberg2007improving}, approximate optimization~\cite{diamond2016general,naik2017embedded}, approximate dynamic programming~\cite{Stellato2017}, and approximate explicit hybrid MPC~\cite{MarcucciDeitsEtAl2017}. Machine learning can be used to approximately solve combinatorial optimization problems~\cite{BengioLodiEtAl2018}, e.g., using supervised learning to train a network architecture to quickly compute feasible but suboptimal solutions online as in~\cite{KargLucia2018,MastiBemporad2019,BertsimasStellato2019,LoehrKlaucoEtAl2020,Cauligi2022a,Cauligi2022}. 
The downside of fast heuristic approaches is often the lack of guarantees of finding an optimal, or even a feasible, solution.

Due to the complexity of solving MIQPs, explicit methods that compute \emph{offline} the optimal control as a function of the system parameters have been developed, e.g., see~\cite{Borrelli2005,Oberdieck2015}. The application of these explicit methods is generally limited to small-dimensional systems with few discrete variables. We therefore focus on the use of \emph{online} solution methods in the present paper. The MIQP in~\eqref{eq:OCP-MIQP} is a mixed-integer convex program~(MICP), i.e., it becomes a convex problem after relaxing the integer feasibility constraints in~\eqref{OCP:int}. Most of the exact optimization algorithms for MIQPs are based on the classical branch-and-bound~(B\&B) technique~\cite{floudas1995nonlinear}. Specifically for the structured \change{MIQP} in~\eqref{eq:OCP-MIQP}, the B\&B strategy has been combined with tailored algorithms for solving the relaxed convex QPs. For example, a B\&B algorithm for MI-MPC has been proposed in combination with a dual active-set solver in~\cite{axehill2006mixed}, a primal active-set solver in~\cite{Hespanhol2019,PRESAS}, an interior point algorithm in~\cite{frick2015embedded}, dual projected gradient methods in~\cite{Axehill2008,naik2017embedded}, a nonnegative least squares solver in~\cite{Bemporad2018}, and with the alternating direction method of multipliers~(ADMM) in~\cite{stellato2018embedded}. Machine learning could also be used to speed up the exact solution of combinatorial optimization problems, e.g., by improving the tree search in B\&B methods~\cite{BengioLodiEtAl2018,balcan2018learning}. B\&B methods for solving mixed-integer nonlinear OCPs have also been studied, e.g., in~\cite{Gerdts2005}.

The computational efficiency of B\&B methods is affected by the node selection and branching rules~\cite{achterberg2005branching,le2017abstract}, and by the convex relaxation solutions~\cite{luo2010semidefinite,Axehill2010}, but there are many other important factors that make state-of-the-art MIQP solvers like \software{GUROBI}~\cite{gurobi} and \software{MOSEK}~\cite{mosek} successful. A crucial algorithmic ingredient is the \emph{presolve} routine~\cite{AchterbergBixbyEtAl2019}, which typically is called in each node of the B\&B method (see Figure~\ref{fig:BB_ilu}) before solving the convex relaxation, and it
performs a collection of operations to remove decision variables, and to remove or tighten constraints. 
In the present paper, we explicitly refer to these presolve techniques as \emph{exact} operations to emphasize that they preserve feasibility and optimality, i.e., a feasible and optimal solution to the reduced problem exists as long as a feasible and optimal solution exists to the original MIQP.
Exact presolve techniques are vital for the good performance of current state-of-the-art MICP solvers, such that B\&B methods can often solve seemingly intractable problems in practice~\cite{Achterberg2013}.
Especially for MI-MPC applications, warm starting strategies exist that aim to reuse the explored B\&B tree at one time step to reduce the computational cost of the B\&B method at the next control time step. In recent years, different variants of B\&B warm starting have been proposed for MI-MPC, e.g., in~\cite{Bemporad2018,Hespanhol2019,Marcucci2021}.

Unlike state-of-the-art mixed-integer solvers, e.g., \software{GUROBI}~\cite{gurobi} and \software{MOSEK}~\cite{mosek}, our aim is to propose a tailored algorithm and its solver implementation for fast embedded MI-MPC applications, i.e., running on microprocessors with considerably less computational resources and available memory~\cite{DiCairano2018tutorial}, while leveraging the special structure of the \change{MIQP} in~\eqref{eq:OCP-MIQP}. The optimization algorithm should be relatively simple to code with a moderate use of resources, while the software implementation is preferably compact and library independent. In the present paper, we will use a tailored active-set based interior point method~(ASIPM) that was presented in~\cite{ASIPM} to solve the block-sparse convex QP relaxations in the B\&B method, resulting in an MIQP solver that will further be referred to as \software{BB-ASIPM}. The recent work in~\cite{Liang2021} showed how infeasibility detection and early termination based on duality can be implemented efficiently for an infeasible primal-dual interior point method~(IPM) based on a computationally efficient projection strategy that will be used within our \software{BB-ASIPM} solver.

{\emph{Our Contributions:} A first contribution of the present paper is an exact block-sparse presolve routine that is tailored to \change{MIQPs} of the form in~\eqref{eq:OCP-MIQP}.
Our previous work in~\cite{Hespanhol2019,Liang2021} showed how domain propagation~\cite{AchterbergBixbyEtAl2019,Savelsbergh1994} can be applied to the condensed form of~\eqref{eq:OCP-MIQP}. In the present paper, we propose an algorithm for domain propagation that can be applied directly to the \change{MIQP} in~\eqref{eq:OCP-MIQP}, based on a forward-backward propagation of variable bounds, in order to speed up the computation times of a B\&B method. We additionally present tailored algorithms for other presolve techniques, including the removal of trivial constraints, dual fixings, constraint coefficient strengthening and binary variable probing. A second contribution involves the use of the proposed presolve routine in a heuristic procedure to compute a feasible but possibly suboptimal MIQP solution. \change{This heuristic} is an extension of the idea in~\cite{Cauligi2022} for improving supervised learning \change{of MICP solutions}. A third contribution is the benchmarking results for a C~code implementation of the \software{BB-ASIPM} algorithm against state-of-the-art MIQP solvers, including \software{GUROBI}~\cite{gurobi}, \software{MOSEK}~\cite{mosek}, \software{GLPK}~\cite{GLPK2022}, \software{Cbc}~\cite{CBC2022}, and Matlab's \software{intlinprog}, based on a case study of mobile robot motion planning with obstacle avoidance constraints. A fourth and final contribution is the demonstration of the computational performance of the \software{BB-ASIPM} solver on a dSPACE Scalexio rapid prototyping unit, using a second case study of stabilization for an underactuated cart-pole with soft contacts.

The paper is organized based on each of the following algorithmic ingredients that are typically important for a good computational performance of a B\&B method for mixed-integer optimal control applications:
\begin{itemize}[noitemsep,topsep=0pt]
		\item variable branching decisions and node selection strategies~(Section~\ref{sec:MIQP}),
		\item structure-exploiting convex solver for efficient QP solutions~(Section~\ref{sec:ASIPM}),
		\item exact presolve techniques for variable fixings and tight relaxations~(Section~\ref{sec:PRESOLVE}),
		\item fast primal heuristic algorithm to find integer-feasible solutions~(Section~\ref{sec:HEURISTICS}),
		\item warm starting and embedded software for mixed-integer MPC~(Section~\ref{sec:MIMPC}).
\end{itemize}
The performance of our proposed MIQP solver is illustrated based on two MI-MPC case studies in Section~\ref{sec:caseStudies}, including hardware-in-the-loop simulations on a dSPACE Scalexio rapid prototyping unit. Finally, Section~\ref{sec:concl} concludes the paper.

\section{Preliminaries on Mixed-Integer Quadratic Programming} \label{sec:MIQP}

We first introduce some of the basic concepts in mixed-integer quadratic programming~(MIQP) solvers based on branch-and-bound~(B\&B) methods, such as convex QP relaxations, node selection and branching strategies.

\subsection{Branch-and-Bound Algorithm}
\label{sec:BB_OCP}

\begin{figure} 
	\centering
	\includegraphics[trim={12cm 0 0 0},width=0.46\textwidth]{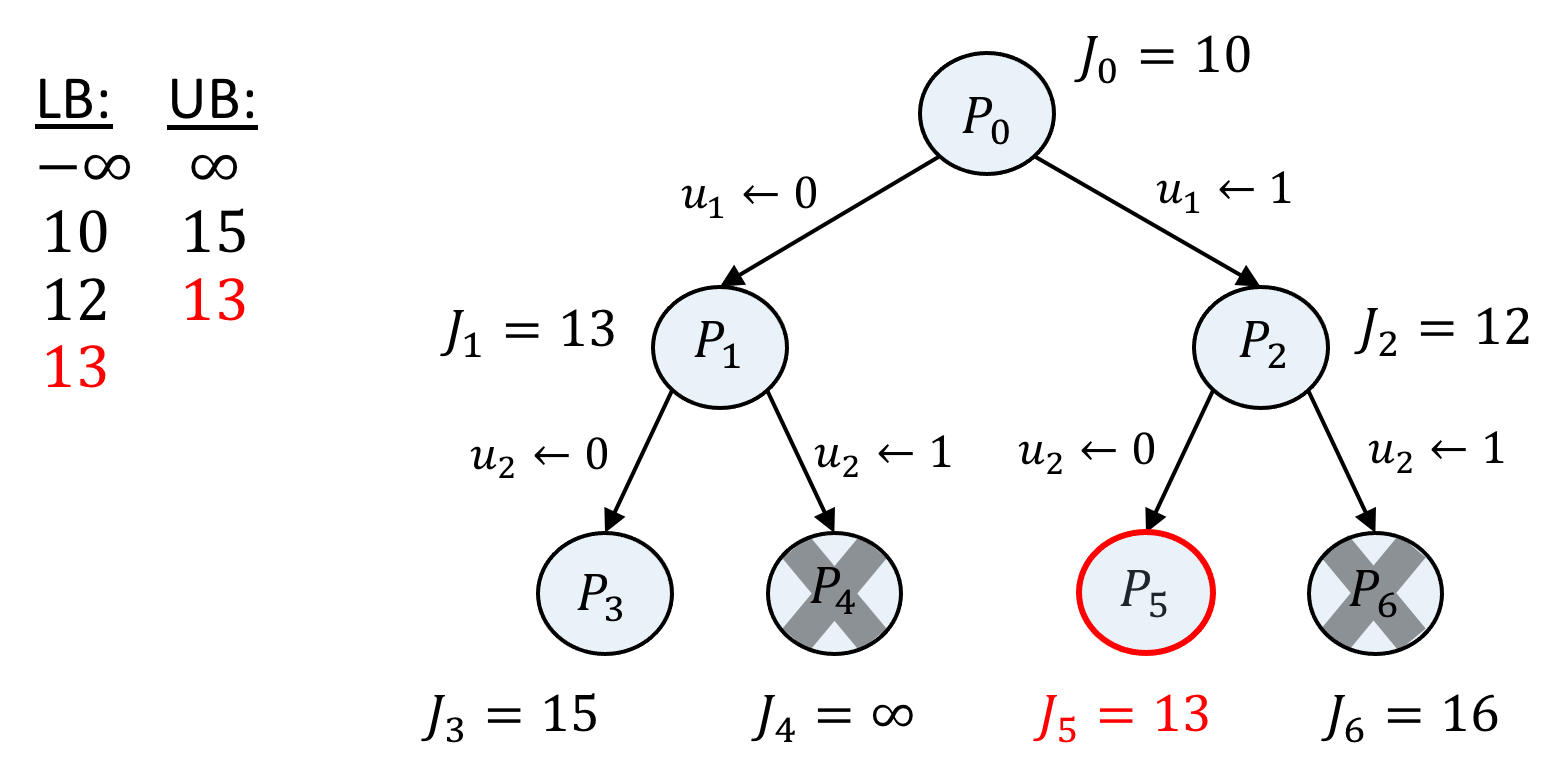}
	\caption{Illustration of the branch-and-bound~(B\&B) method as a binary search tree. A selected node can be either \emph{branched}, e.g., resulting in two partitions for each binary variable $u_{i,j} \in \{0, 1\}$, or \emph{pruned} based on feasibility or the current upper bound.}
	\label{fig:BB_ilu}
\end{figure}

The main idea of the B\&B optimization algorithm is to sequentially create partitions of the original MIQP problem and attempt to solve each of these partitions. While solving each partition may still be challenging, it is fairly efficient to obtain local lower bounds on the optimal objective value, e.g., by solving convex relaxations of the MIQP subproblem. If we happen to obtain an integer-feasible solution while solving a relaxation, we can then use it to obtain a global upper bound for the solution to the original problem. This may help to avoid solving or branching certain partitions that were already created, i.e., such partitions or nodes can be \emph{pruned}. The general algorithmic idea of partitioning is better illustrated as a binary search tree, see Figure~\ref{fig:BB_ilu}.
A key step in this approach is how to create the partitions, i.e., which node to choose and which discrete variable to select for branching. Since we solve a convex relaxation at every node of the tree, it is natural to branch on one of the discrete variables with fractional values in the optimal solution of the relaxation. Therefore, if a binary variable, e.g., $u_{i,j} \in \{0, 1\}$ has a fractional value in a convex relaxation, we create two partitions where we add the equality constraints $u_{i,j} = 0$ and $u_{i,j} = 1$, respectively.
Another key choice is the order in which the created subproblems are solved. These steps have been extensively explored in the literature and various heuristics are implemented in state-of-the-art tools~\cite{achterberg2005branching}.

\subsection{Convex Quadratic Program Relaxations}
\label{sec:qprel}

We use a B\&B method to solve the MIQP~\eqref{eq:OCP-MIQP} by solving convex quadratic programming~(QP) relaxations that are constructed by dropping the integer feasibility constraints in~\eqref{OCP:int}, e.g., $u_{i,j} \in \{0,1\}$ is relaxed to a bounded continuous variable $0 \le u_{i,j} \le 1$. Other convex relaxations for MIQPs have been studied in the literature such as moment or SDP relaxations that may be tighter than QP relaxations~\cite{luo2010semidefinite,Axehill2010}, however they are often relatively expensive to solve, even if they may drastically reduce the nodes explored in a B\&B method. In this paper, we restrict to standard QP relaxations and we present a tailored active-set based interior point method~(ASIPM) in Section~\ref{sec:ASIPM}. The \change{\software{ASIPM} solver} has been shown to be competitive with state-of-the-art QP solvers for embedded MPC~\cite{ASIPM}, it benefits from warm-starting 
\change{and it allows for an efficient implementation of infeasibility detection and early termination based on duality~\cite{Liang2021}}. For using the \software{ASIPM} solver, the relaxations need to be convex, i.e., the Hessian matrices $H_i$ need to be positive semi-definite in~\eqref{OCP:obj} such that each solution to a QP relaxation is globally optimal.

\subsection{Tree Search: Node Selection Strategies}

A common implementation of the B\&B method is based on a \emph{depth-first} node selection strategy, which can be implemented by a last-in-first-out~(LIFO) buffer. The next node to be solved is selected as one of the children of the current node and this process is repeated until integer feasibility or until a node is pruned because the node is either infeasible or dominated by the upper bound, which is followed by a backtracking procedure. Instead, a \emph{best-first} strategy selects the node with the lowest local lower bound so far. In this paper, we will employ a combination of the depth-first and best-first node selection approach. This \change{combination} aims to find an integer-feasible solution quickly at the start of the B\&B procedure~(depth-first) to allow for early pruning, followed by a more greedy search for better feasible solutions~(best-first).

\subsection{Reliability Branching for Variable Selection}

Many branching rules exist such as ``most infeasible'' branching which selects the integer variable with fractional part in the QP relaxation that is closest to $0.5$. Even though \change{this rule} is used quite often, e.g., in~\cite{Bemporad2018,stellato2018embedded}, it generally does not perform very well in practice~\cite{achterberg2005branching}. We instead use \emph{reliability branching} which is based on a combination of two concepts for variable selection: strong branching and pseudo-costs~\cite{achterberg2005branching}. Strong branching relies on temporarily branching, both up (to higher integer) and down (to lower integer), for every integer variable that has a fractional value in the solution of a convex QP relaxation in a given node, before committing to the variable that provides the highest value for a particular score function. The increase in objective values $\Delta_{i,j}^+$, $\Delta_{i,j}^-$ are computed when branching the integer variable $u_{i,j}$, respectively, up and down. Given these quantities, a simple scoring function $\text{score}(\cdot,\cdot)$ is computed for each integer variable, such as the product score function~\cite{le2017abstract}
\begin{equation}
S_{i,j} = \text{score}(\Delta_{i,j}^-,\Delta_{i,j}^+) = \text{max}(\Delta_{i,j}^+,\epsilon) \, \cdot \, \text{max}(\Delta_{i,j}^-,\epsilon), \label{eq:scorefun}
\end{equation}
given a small positive value $\epsilon > 0$.
Full strong branching has been empirically shown to provide smaller search trees in practice~\cite{achterberg2005branching}, but it is relatively expensive since several QP relaxations are solved in order to select one variable to branch on.

%

The idea of pseudo-costs aims at approximating the increase of the objective function to decide which variable to branch on, without solving additional QP relaxations. This can be done by keeping statistic information for each integer variable, i.e., the \emph{pseudo-costs} that represent the average increase in the objective value per unit change in that particular integer variable when branching. The current pseudo-cost values are computed based on branching decisions that occurred in different parts of the B\&B tree~\cite{achterberg2005branching}.
Each variable has two pseudo-costs, $\phi_{i,j}^{-}$ when the variable is branched ``down'' and $\phi_{i,j}^{+}$ when it is branched ``up''. 
However, at the beginning of the B\&B algorithm, the pseudo-costs are not yet initialized, which is when branching decisions typically impact the tree size the most. \emph{Reliability branching} uses strong branching to initialize the pseudo-costs until a certain condition of reliability is satisfied, e.g., it moves to using pseudo-costs for a particular variable once it has been branched on a specified number $\eta_{rel}$ of times~\cite{achterberg2005branching}. 
Thus, reliability branching coincides with pseudo-cost branching if $\eta_{rel}=0$, with strong branching if $\eta_{rel}=\infty$, but typically a value $1 \le \eta_{rel} \le 3$ is chosen.
This rule is further augmented by implementing a look ahead limit in the number of candidates, as well as a limit on the number of QP iterations in the strong branching step.

\section{Convex Relaxation Solver: Active-set Interior Point Method~(ASIPM)} \label{sec:ASIPM}

We reformulate the MIQP in~\eqref{eq:OCP-MIQP} by the following compact notation
\begin{subequations} \label{eq:MIQP}
	\begin{alignat}{5}
	\underset{\z}{\text{min}} \quad & \frac{1}{2} {\z}^{\top} H\, \z + h^\top \z \label{eq:MIQP-primal} \\
	\text{s.t.} \quad\; & G\, \z \;\le \;g, \quad F\, \z \;&&= \;f, \label{eq:MIQP-GF} \\
	\quad\; & \z_{j} \in \Z, \quad && j \in \I, \label{MIQP:int}
	\end{alignat}
\end{subequations} 
where $\z$ includes all primal optimization variables and the index set $\I$ denotes the integer variables. For many practical MPC formulations, the objective can be written as
$\frac{1}{2} {\z}^{\top} H\, \z + h^\top \z = \frac{1}{2}\v^{\top}Q \v + h_{\v}^\top \v + h_{\y}^\top \y$,
where $H \succeq 0$ and $Q \succ 0$, by partitioning $\z$ into $\v$ and $\y$, entering in the linear-quadratic and linear-only terms, respectively. The objective of any MIQP~\eqref{eq:MIQP} can be reformulated in the latter form by a change of variables, e.g., based on the eigenvalue decomposition for the Hessian $H \succeq 0$.



We focus on the efficient solution of convex QP relaxations in a B\&B optimization method. For a particular node of the B\&B tree, a convex QP is obtained by relaxing the integer feasibility constraint in~\eqref{MIQP:int} as
$\ubar{\z}_{j} \le \z_{j} \le \bar{\z}_{j}, \; \forall j \in \tilde{\I}$,
where the index set $\tilde{\I} \subseteq \I$ denotes each integer variable that has not been fixed due to branching, or due to the presolve routine, in the current node of the B\&B tree. The values $\ubar{\z}_{j}$ and $\bar{\z}_{j}$ denote the lower and upper bound values for each integer variable $j \in \tilde{\I}$, respectively. 
This section describes an overview of recent work on the efficient implementation of infeasibility detection and early termination based on duality~\cite{Liang2021}, applied to the active-set based interior point method~(ASIPM) that was proposed in~\cite{ASIPM}.

\subsection{Dual QP Problem Formulation}
We consider the primal convex QP of the form
\begin{subequations} \label{eq:QP}
	\begin{alignat}{5}
	\underset{\v, \y}{\text{min}} \quad & \phi(\v,\y) := \frac{1}{2}\v^{\top}Q \v + h_{\v}^\top \v + h_{\y}^\top \y \label{eq:QP-primal} \\
	\text{s.t.} \quad\; & G_{\v}\v + G_{\y} \y \;\le \;g, \label{eq:QP-G} \\
	\quad\; & F_{\v} \v + F_{\y} \y \;= \;f, \label{eq:QP-F}
	\end{alignat}
\end{subequations} 
where $ Q \succ 0 $ in the primal objective $\phi(\v, \y)$, and the inequality constraints~\eqref{eq:QP-G} include both the original inequalities from~\eqref{eq:MIQP-GF} and the convex relaxations of the integer feasibility constraints. We additionally define the compact notation $ \z:=[\v^\top \y^\top]^\top $, $ G:=[G_{\v} |G_{\y}] $, $ F:=[F_{\v}| F_{\y}] $, $ h:=[h_{\v}^\top h_{\y}^\top]^\top $ and $ H:=\begin{bmatrix} Q & 0\\ 0 & 0 \end{bmatrix} $.
The dual QP of~\eqref{eq:QP} reads as
\begin{subequations} \label{eq:dual-QP}
	\begin{alignat}{5}
	\max_{\mu,\lam} \quad & \psi(\mu,\lam) := -\frac{1}{2}\|\hat h(\mu,\lam)\|^2_{Q^{-1}} 
	-\begin{bmatrix} g\\ f \end{bmatrix}^\top \begin{bmatrix} \mu\\ \lam \end{bmatrix}
	\label{eq:dual}\\
	\text{s.t.} 
	\quad\; & G_{\y}^\top \mu + F_{\y}^\top \lam \;= \;-h_{\y}, \label{eq:dual-eq} \\
	\quad\; & \mu \;\ge \;0, \label{ineq:mu}
	\end{alignat}
\end{subequations} 
where $\lam$ and $\mu$ denote the Lagrange multipliers for the equality and inequality constraints, respectively, and 
$\hat h(\mu,\lam) := h_{\v} + G_{\v}^\top \mu + F_{\v}^\top \lam$
is defined for simplifying the dual objective function $\psi(\mu,\lam)$ in~\eqref{eq:dual}.

\subsection{Primal-dual Interior Point Method}\label{subsec:IPM}

A primal-dual IPM uses a Newton-type method to solve a sequence of relaxed Karush-Kuhn-Tucker~(KKT) conditions for the convex QP in~\eqref{eq:QP}. An iteration of the IPM typically solves the reduced linear system~\cite{Wright1997}
\begin{equation}\label{eq:IPM}
\begin{bmatrix} H & F^\top & G^\top \\F & 0 & 0 \\G & 0 & -W^k \end{bmatrix}
\begin{bmatrix} \Delta \z^k\\\Delta \lam^k\\\Delta \mu^k \end{bmatrix}
= -\begin{bmatrix} r_{\z}^k\\r_\lam^k\\\bar{r}_\mu^k \end{bmatrix},
\end{equation}
where $W^k = \diag{w^k} \succ 0$, $w^k \in \R^{n_{\mathrm{ieq}}}$ and $w_i^k = s_i^k/\mu_i^k > 0$, given current values of the slack variables $s^k \in \R^{n_{\mathrm{ieq}}}$ and Lagrange multipliers $\mu^k \in \R^{n_{\mathrm{ieq}}}$ for the inequality constraints in the $k^{\text{th}}$ iteration of the Newton-type method.
The right-hand side in~\eqref{eq:IPM} denotes the residual value for the optimality conditions and reads as
\begin{equation}\label{def:rhs}
\begin{aligned}
r_{\z}^k &= H \z^k + F^\top \lam^k + G^\top \mu^k + h, \quad r_\lam^k = F \z^k - f, \\
r_\mu^k &= G \z^k - g + s^k,  \quad r_s^k = M^k S^k \mathrm{1} - \tau^k \mathrm{1}, \quad \bar{r}_\mu^k = r_\mu^k - M^{k^{-1}} r_s^k,
\end{aligned}
\end{equation}
based on the barrier parameter $\tau^k \rightarrow 0$ for $k \rightarrow \infty$, where $M^k = \diag{\mu^k}$, $S^k = \diag{s^k}$ and the slack variables are updated as $\Delta s^k = - (W^k \Delta \mu^k + M^{k^{-1}} r_s^k)$. We consider an infeasible primal-dual IPM for which the starting point $\left\{\left(\z^{0}, \mu^{0}, \lambda^{0}, s^{0}\right)\right\}$ may not be primal and/or dual feasible, but the slack variables and Lagrange multipliers are positive at each iteration, i.e., $s^k \ge 0$ and $\mu^k \ge 0$. See~\cite{Wright1997} for details on properties and implementation of primal-dual IPMs.

\begin{Remark}
	Fixed variables, i.e., $\ubar{\z}_{j} = \bar{\z}_{j}$, and redundant constraints should be removed from each convex QP for computational efficiency. In some cases, $\nx^0 = 0$ if an initial state value $x_0 = \hat{x}_0$ is imposed in~\eqref{eq:OCP-MIQP}, and many other variables are fixed due to exact presolve operations. Therefore, we require the QP solver to allow defining a varying number of state $x_i \in \R^{\nx^i}$ and control variables $u_i \in \R^{\nU^i}$, and a varying number of inequality constraints $\nc^i$ at each time step $i = 0,1,\ldots,N$ in the prediction horizon.
\end{Remark}

\subsection{Active-set based Inexact Newton Method}\label{subsec:activeSet}

In this paper, we use the active-set based inexact Newton implementation of \software{ASIPM} from~\cite{ASIPM}, which allows for block-sparse structure exploitation, reduced computations, warm starting and improved numerical conditioning.
For inequality constraints that are strictly active at the optimum, $s_i^k \rightarrow 0$ and $\mu_i^k > 0$ such that $w_i^k \rightarrow 0$ for $k \rightarrow \infty$. On the other hand, for inactive inequality constraints, $\mu_i^k \rightarrow 0$ and $s_i^k > 0$ such that $w_i^k \rightarrow \infty$ for $k \rightarrow \infty$. Thus, the $w$-values become increasingly small and large for active and inactive inequality constraints, respectively, which highlights the well-known numerical ill-conditioning that must be tackled when implementing IPMs~\cite{Shahzad2010b}.
	Based on lower and upper bound values $0 < \wmin \ll \wmax$, at each IPM iteration, we classify the inequality constraints into the following three categories:
	\begin{itemize}[noitemsep,topsep=0pt]
		\item \emph{inactive}: constraints that are likely to be inactive at the solution, with index set $\Iwinactive := \Set{i}{w_i \geq \wmax}$,
		\item \emph{active}: constraints that are likely to be active at the solution, with index set $\Iwactive := \Set{i}{w_i \leq \wmin}$,
		\item \emph{guessing}: constraints that are uncertain, i.e., not in previous categories, with index set $\Iwguess := \Set{i}{\wmin < w_i < \wmax}$.
	\end{itemize}

In the inexact Newton-type algorithm of \software{ASIPM}~\cite{ASIPM}, we solve the linearized KKT system in Eq.~\eqref{eq:IPM} approximately by solving the following reduced block-tridiagonal linear system
\begin{align}
	\left[H + \frac{1}{\varepsilon}F^\top F + \frac{1}{\wmin}G_\act^\top G_\act + G_\guess^\top W_\guess^{k^{-1}} G_\guess + \frac{1}{\wmax} G_\inact^\top G_\inact\right] \Delta z^k = -\bar{r}_z^k,
	\label{eq:ls_primal}
\end{align}
where the right-hand side reads $\bar{r}_z^k = r_z^k + \frac{1}{\varepsilon} F^\top r_{\lambda}^k + \frac{1}{\wmin} G_\act^\top \bar{r}_{\mu,\act}^k + G_\guess^\top W_\guess^{k^{-1}} \bar{r}_{\mu,\guess}^k + G_\inact^\top W_\inact^{k^{-1}} \bar{r}_{\mu,\inact}^k$, by using a block-tridiagonal Cholesky factorization for the augmented Hessian matrix in~\eqref{eq:ls_primal}. 
The inequality constraints and $w$-values have been reordered and grouped together according to their category, i.e., $W_\inact^k := W_{i \in \Iwinactive}^k$, $W_\act^k := W_{i \in \Iwactive}^k$ and $W_\guess^k := W_{i \in \Iwguess}^k$. Similarly, we split $G$ and $\bar{r}_{\mu}$ into the corresponding blocks.
The search directions for the Lagrange multipliers are computed as
\begin{subequations}
		\begin{alignat}{5}
		\hspace{-2mm}\Delta \lambda^k &= \frac{1}{\varepsilon} \left( r_\lambda^k + F \Delta z^k \right),
		&\Delta \mu_\act^k &= \frac{1}{\wmin} \left( \bar{r}_{\mu,\act}^k + G_\act \Delta z^k \right)\!, \\
		\hspace{-2mm}\Delta \mu_\guess^k &= W_\guess^{k^{-1}} \! \left( \bar{r}_{\mu,\guess}^k \!+ \!G_\guess \Delta z^k \right)\!, 
		&\Delta \mu_\inact^k &= W_\inact^{k^{-1}}\! \left( \bar{r}_{\mu,\inact}^k \!+\! G_\inact \Delta z^k \right)\!,
		\label{eq:ls_dual}
		\end{alignat}
\end{subequations}
and the update to the slack variables as $\Delta s^k = - M^{k ^{-1}} (S^k \Delta\mu^k + r_s^k)$.

\subsection{Early Termination based on Duality and Infeasibility Detection}

We do not need to solve a convex QP relaxation in the B\&B method if
\begin{itemize}[noitemsep,topsep=0pt]
	\item the convex QP relaxation is infeasible,
	\item the optimal solution has an objective value that exceeds the current global upper bound.
\end{itemize}
In both cases, the node, and hence the corresponding subtree, can be pruned from the B\&B tree. A considerable computational effort can be avoided if the above scenarios are detected early, i.e., more quickly than the time for solving the convex QP relaxations. Based on our work in~\cite{Liang2021}, we describe a tailored early termination strategy for infeasible primal-dual IPMs to handle both cases and to reduce the computational effort of the B\&B method without affecting the quality of the optimal solution.

Due to duality properties, see, e.g.,~\cite{Boyd2004}, for a dual feasible point $(\mu,\lambda)$ that satisfies~\eqref{eq:dual-eq}-\eqref{ineq:mu} and a primal feasible point $(\v,\y)$ that satisfies~\eqref{eq:QP-G}-\eqref{eq:QP-F}
\begin{equation}
\psi(\mu,\lambda) \le \psi^\star \le \phi^\star \le \phi(\v,\y), \label{eq:duality}
\end{equation}
where $\phi^\star$ and $\psi^\star$ are the primal and dual optima, respectively. Based on~\eqref{eq:duality}, we propose an approach to find a dual feasible point that allows for early termination when $ \psi(\mu,\lambda) > \texttt{UB}$ for the current upper bound~($\texttt{UB}$) to the optimum of the MIQP.




\subsubsection{Projection Strategy for Dual Feasibility}

A dual feasible solution, i.e., $(\mu^k, \lam^k)$ satisfying~\eqref{eq:dual-eq}-\eqref{ineq:mu} is required in order to perform early termination based on the duality result in~\eqref{eq:duality}. Since an infeasible IPM generally does not provide a solution that satisfies the equality constraint in~\eqref{eq:dual-eq} until convergence, we proposed~\cite{Liang2021} a projection step to compute new values $ (\mu^+,\lam^+)=(\mu^k+\Delta \mu, \lam^k +\Delta \lam) $ satisfying~\eqref{eq:dual-eq} and~\eqref{ineq:mu}
by solving the optimization problem
\begin{equation}\label{eq:projection}
\begin{aligned} 
\min_{\Delta \v, \Delta \lam, \Delta \mu} \quad & \frac{1}{2} \|\Delta \v\|_{Q}^2 + \frac{1}{2} \|\Delta \lam\|_{\epsilon_\mathrm{dual}}^2 + \frac{1}{2} \|\Delta \mu\|_{W^k}^2 
\\
\text{s.t.} \quad\;\; & \begin{bmatrix} Q \\0 \end{bmatrix} \Delta \v + F^\top \Delta \lam + G^\top \Delta \mu
= - \begin{bmatrix} 0\\ r_{\y}^k \end{bmatrix},
\end{aligned}
\end{equation}
where $ r_{\y}^k:=F_{\y}^\top \lam^k + G_{\y}^\top \mu^k + h_{\y} $, $W^k = \diag{w^k}$ and $w_i^k = \frac{s_i^k}{\mu_i^k} > 0$. 

There are three advantages for the projection~\eqref{eq:projection} over a standard minimum-norm projection. Neither of these projections directly enforces the positivity constraints $\mu^k + \Delta \mu>0$, which would require solving an inequality constrained QP. A first advantage of~\eqref{eq:projection} is that, since $w_i^k = \frac{s_i^k}{\mu_i^k} > 0$, the term $\|\Delta \mu\|_{W^k}^2 = \sum_i \left( \frac{s_i^k}{\mu_i^k} \Delta \mu_i^2 \right)$ in the objective of~\eqref{eq:projection} penalizes the step $\Delta \mu_i$ to remain small when $\frac{s_i^k}{\mu_i^k}$ is relatively large, i.e., when $\mu_i^k>0$ is close to zero. This makes the step smaller when approaching the positivity constraint, such that it is more likely to satisfy $\mu_i^k + \Delta \mu_i>0$ without making the update step always small. Second, the solution $(\Delta \mu, \Delta \lam)$ to the optimization problem~\eqref{eq:projection} is equivalent~\cite{Liang2021} to solving the symmetric system
\begin{equation}\label{eq:step}
\begin{bmatrix}H & F^\top & G^\top\\F & -\epsilon_{\mathrm{dual}} I & 0 \\G & 0 & -W^k \\\end{bmatrix}
\begin{bmatrix}\Delta \z\\\Delta \lam\\\Delta \mu\end{bmatrix}
=-\begin{bmatrix}\begin{bmatrix}0\\r_{\y}^k\end{bmatrix}\\0\\0\end{bmatrix},
\end{equation}
which is an IPM iteration similar to~\eqref{eq:IPM} with the only differences being the right-hand side and the augmented Lagrangian type regularization $\epsilon_{\mathrm{dual}} > 0$. 
The equivalence means that the same block-tridiagonal matrix factorization for the solution of the reduced linear system~\eqref{eq:ls_primal} in each iteration of \software{ASIPM}~\cite{ASIPM} can be reused to compute the projection step in~\eqref{eq:step}.
Third, the projection~\eqref{eq:step} aims at retaining the IPM progress towards the optimum, since the projection step does not increase the residual value for the remaining optimality conditions in~\eqref{def:rhs}, due to the zero elements in the right-hand side of~\eqref{eq:step}. 

\subsubsection{Early Termination Strategy}

We use the projection step in~\eqref{eq:step} for our early termination strategy, see Algorithm~\ref{alg:earlyTermination}. As discussed later, one dual objective evaluation is computationally cheaper than a projection on a dual feasible point. Therefore, Algorithm~\ref{alg:earlyTermination} performs the projection in~\eqref{eq:step} if and only if the dual objective $\psi(\mu^k,\lam^k)$ is larger than the current UB~(line~5). 
Multiple evaluations of the projection step in~\eqref{eq:step}, reusing the same matrix factorization, may be needed to ensure dual feasibility $\| F_{\y}^\top \lam + G_{\y}^\top \mu + h_{\y} \|< tol$ when using the inexact Newton implementation of~\cite{ASIPM} \change{in the \software{ASIPM} solver}.


	\begin{algorithm}[h]
		\caption{Early termination for IPM in B\&B method.}
		\label{alg:earlyTermination}
		\begin{algorithmic}[1]
			\Require Warm start $\left\{\left(\z^{0}, \mu^{0}, \lambda^{0}, s^{0}\right)\right\}$, $ tol $, and \texttt{UB}.
			\While {$ \max\{\tau^k, \|r^k\|\} > tol $}
			\If {$\psi(\mu^{k},\lam^{k}) > \texttt{UB} \;\;\;\land\;\;$ \texttt{dual\_feasible}}
			\State {\bf break} while loop. \Comment{Early termination}
			\ElsIf {$\psi(\mu^{k},\lam^{k}) > \texttt{UB}$}
			\State Compute projection step $ (\Delta \mu, \Delta \lam) $ in~\eqref{eq:step}.
			\State $\mu \gets \mu^k + \Delta \mu, \, \lam \gets \lam^k + \Delta \lam$, and $r_{\y} \gets F_{\y}^\top \lam + G_{\y}^\top \mu + h_{\y}$.
			\If {$ \mu>0 \;\;\land\;\; \|r_{\y}\|< tol$}
			\State $\mu^k \gets \mu$, $\lam^k \gets \lam$, $r_{\y}^k \gets r_{\y}$, and \texttt{dual\_feasible} $\gets 1$. 
			\IfThen {$\psi(\mu^{k},\lam^{k}) > \texttt{UB}$}{{\bf break} while loop.} \Comment{Early termination}
			\EndIf
			\EndIf
			\State Perform an IPM iteration~\eqref{eq:IPM}, e.g., see~\cite{ASIPM}.
			\EndWhile
		\end{algorithmic}
	\end{algorithm}

The standard iterates of an IPM can also be used to generate certificates of infeasibility~\cite{Todd04detectinginfeasibility}. 
Proposition~\ref{prop:main}, proved in~\cite{Liang2021}, states conditions such that the dual objective $\psi(\mu^k,\lam^k)$ is unbounded in the limit of the primal-dual IPM iterations. Therefore, our proposed early termination strategy is effective for infeasibility detection and, given a tight UB value from the B\&B optimization method, it may lead to termination even before a certificate of infeasibility can be found.
\begin{proposition}\label{prop:main}
	If the sequence of primal-dual iterates $\left\lbrace\left(\z^k, \mu^k, \lambda^k, s^k\right)\right\rbrace$ of the IPM
	satisfy $ \mu^{k^\top} s^k \le {\mu^0}^\top s^0 $ and $ \|\mu^k\|\to \infty $, then the dual objective $\psi(\mu^k,\lam^k) \rightarrow \infty$.
\end{proposition}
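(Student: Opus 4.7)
The plan is to rewrite $\psi(\mu^k,\lam^k)$ in a form that cleanly exposes the terms governed by the hypotheses. Starting from the stationarity residual $r_\z^k = H\z^k+h+F^\top\lam^k+G^\top\mu^k$ of the IPM iteration, the $\v$-block reads $Q\v^k + \hat h(\mu^k,\lam^k) = r_{\z,\v}^k$, which lets me eliminate the quadratic penalty as
\begin{equation*}
\|\hat h(\mu^k,\lam^k)\|_{Q^{-1}}^2 = \|r_{\z,\v}^k\|_{Q^{-1}}^2 - 2(r_{\z,\v}^k)^\top \v^k + (\v^k)^\top Q\v^k.
\end{equation*}
The primal residual identities $g = G\z^k + s^k - r_\mu^k$ and $f = F\z^k - r_\lam^k$ then let me rewrite the linear dual term $g^\top\mu^k + f^\top\lam^k$ in terms of $\z^k$, $(s^k)^\top\mu^k$, and residuals, after which one more use of stationarity eliminates $G^\top\mu^k+F^\top\lam^k$. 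Collecting everything and cancelling the duplicated $(\v^k)^\top Q\v^k$ contributions yields the identity
\begin{equation*}
\psi(\mu^k,\lam^k) = \tfrac{1}{2}(\v^k)^\top Q\v^k + h^\top \z^k - (s^k)^\top\mu^k + E^k,
\end{equation*}
where $E^k$ is a linear combination of $r_\z^k, r_\mu^k, r_\lam^k$ paired with $\z^k, \mu^k, \lam^k$.

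Next I would invoke the standard property of infeasible primal-dual IPMs that the residuals $(r_\z^k, r_\mu^k, r_\lam^k)$ remain uniformly bounded along the iterates under the usual Mehrotra-type step rule. Combined with the hypothesis $(\mu^k)^\top s^k\le (\mu^0)^\top s^0$, the behavior of $\psi(\mu^k,\lam^k)$ is then governed by $\tfrac{1}{2}(\v^k)^\top Q\v^k$, $h^\top \z^k$, and the parts of $E^k$ that scale with $\|\mu^k\|$ or $\|\lam^k\|$. I would argue by cases on $\{\z^k\}$: if $\{\z^k\}$ is unbounded, positive definiteness $Q\succ 0$ makes $\tfrac{1}{2}(\v^k)^\top Q\v^k$ grow super-linearly in $\|\z^k\|$ and dominate every other term, forcing $\psi\to+\infty$. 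If $\{\z^k\}$ is bounded, stationarity forces $G^\top\mu^k+F^\top\lam^k$ to remain bounded while $\|\mu^k\|\to\infty$, so normalizing $(\mu^k,\lam^k)/\|\mu^k\| \to (\bar\mu,\bar\lam)$ along a subsequence yields a nonzero ray with $G^\top\bar\mu+F^\top\bar\lam=0$ and $\bar\mu\ge 0$. The dominant contribution to $-g^\top\mu^k-f^\top\lam^k$ along this ray is $-\|\mu^k\|(g^\top\bar\mu+f^\top\bar\lam)$, which strictly dominates the bounded and linearly-growing residual pieces provided that coefficient is strictly positive.

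The main obstacle is certifying that $-(g^\top\bar\mu+f^\top\bar\lam)>0$, i.e., that the limiting ray $(\bar\mu,\bar\lam)$ is a Farkas infeasibility direction rather than a benign recession direction. To rule out the benign case I would exploit the complementarity bound via the identity $s^k = g - G\z^k + r_\mu^k$: taking inner product with $\mu^k$ gives $(\mu^k)^\top s^k = g^\top\mu^k - (\z^k)^\top G^\top\mu^k + (\mu^k)^\top r_\mu^k$, and boundedness of the left-hand side together with boundedness of $\{\z^k\}$ and $r_\mu^k$ forces $g^\top\mu^k/\|\mu^k\| \to g^\top\bar\mu$ to be non-positive at leading order. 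A parallel argument using $f = F\z^k - r_\lam^k$ pins down $f^\top\lam^k/\|\mu^k\|$, and combining the two with $G^\top\bar\mu+F^\top\bar\lam=0$ yields the strict sign condition $g^\top\bar\mu + f^\top\bar\lam < 0$ characteristic of primal infeasibility, completing the proof that $\psi(\mu^k,\lam^k)\to+\infty$.
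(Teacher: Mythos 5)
A preliminary remark: the paper does not actually prove Proposition~\ref{prop:main} itself; it explicitly defers the proof to~\cite{Liang2021}, so your attempt has to stand entirely on its own. Its opening move is fine: the identity
\begin{equation*}
\psi(\mu^k,\lam^k) \;=\; \tfrac{1}{2}(\v^k)^\top Q\, \v^k \;+\; h^\top \z^k \;-\; (s^k)^\top\mu^k \;+\; E^k,
\end{equation*}
with $E^k$ a sum of residual--iterate pairings, is correct. The first genuine gap is in your unbounded case. The coercive term involves only the $\v$-block, because $H=\mathrm{diag}(Q,0)$ is merely positive semidefinite: if $\{\z^k\}$ is unbounded only through the $\y$-components, then $\tfrac12(\v^k)^\top Q\v^k$ does not grow at all. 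In fact $\psi(\mu^k,\lam^k)$ does not depend on $\z^k$ whatsoever, and you can check that every $\y^k$-dependent term in your identity cancels identically; a case distinction on boundedness of $\{\z^k\}$ combined with term-by-term domination is therefore ill-posed, since the apparently growing terms are not independent. Even when $\|\v^k\|\to\infty$, the pairings $(\mu^k)^\top r_\mu^k+(\lam^k)^\top r_\lam^k$ inside $E^k$ can be negative and of size proportional to $\|\mu^k\|$, which tends to infinity by hypothesis, and you establish no relation between the growth of $\|\mu^k\|$ and that of $\|\v^k\|^2$; so ``dominates every other term'' is unjustified.

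The second gap is the one you flag yourself as the main obstacle, and your proposed resolution is circular. Dividing $(\mu^k)^\top s^k=g^\top\mu^k-(\z^k)^\top G^\top\mu^k+(\mu^k)^\top r_\mu^k$ by the multiplier norm, doing the same with $f=F\z^k-r_\lam^k$, and using $G^\top\bar\mu+F^\top\bar\lam=0$ together with boundedness of $\{\z^k\}$, what you actually obtain (writing $(\bar\mu^k,\bar\lam^k)$ for the normalized multipliers) is
\begin{equation*}
g^\top\bar\mu+f^\top\bar\lam \;=\; -\lim_{k\to\infty}\; \frac{(r_\mu^k)^\top\mu^k+(r_\lam^k)^\top\lam^k}{\left\|(\mu^k,\lam^k)\right\|},
\end{equation*}
which carries no sign information: it is the original question in disguise, because in your bounded case $\psi(\mu^k,\lam^k)=(r_\mu^k)^\top\mu^k+(r_\lam^k)^\top\lam^k+O(1)$, so proving $\psi\to\infty$ is \emph{equivalent} to proving that this same residual--multiplier sum diverges to $+\infty$. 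Your claim that $g^\top\bar\mu$ is ``non-positive at leading order'' does not follow from boundedness alone; the complementarity identity only gives $g^\top\bar\mu=\lim\bigl[(\z^k)^\top G^\top\bar\mu^k-(r_\mu^k)^\top\bar\mu^k\bigr]$, whose sign is undetermined. Determining that sign is exactly where the dynamics of the IPM iteration must enter---e.g., that the linear residuals contract along Newton directions as $r^{k+1}=(1-\alpha_k)r^k$ and that iterates remain in a neighborhood tying residual norms to complementarity---none of which your sketch uses beyond uniform boundedness of residuals. (Two smaller points: normalizing by $\|\mu^k\|$ presumes $\|\lam^k\|/\|\mu^k\|$ stays bounded, which is not given, and a subsequence argument yields divergence only along subsequences, whereas the proposition asserts it for the whole sequence.)
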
 \vspace{-6mm}

\subsubsection{Computational Complexity}\label{subsec:complexity}

The proposed early termination strategy requires two computational steps, i.e., the projection and the dual objective evaluation, which are typically not needed in a standard IPM. Considering the optimal control structured program~\eqref{eq:OCP-MIQP}, the evaluation of the dual objective value~\eqref{eq:dual} requires $N (n^2+2\,n\,m+2\,n\,p)$
operations to compute $L^{-1} \left( G_{\v}^\top \mu + F_{\v}^\top \lam \right)$ based on a block-diagonal Cholesky factorization $Q = L L^\top$, in which $n$, $m$ and $p$ are the number of variables in~$\v$ (see~\eqref{eq:QP}), the number of equality and inequality constraints, respectively, per control interval, and $N$ is the number of control intervals.

Based on~\eqref{eq:step}, we can perform one projection step at the computational cost of one IPM iteration. 
This allows for reusing the corresponding matrix factorization in the subsequent IPM iteration if the projection is not successful. Based on the particular ASIPM implementation from Section~\ref{subsec:activeSet}, as originally proposed in~\cite{ASIPM}, one iteration requires a block-tridiagonal Cholesky factorization for the matrix in~\eqref{eq:ls_primal}, for which the dominant terms in the computational cost are
\begin{equation}
N\left(\frac{7}{3} \nx^3+4 \nx^2 \nU+2 \nx \nU^2+\frac{1}{3} \nU^3\right), \label{eq:ASIPM_cost}
\end{equation}
where $\nx$ and $\nU$ denote the number of state and control variables per interval in~\eqref{eq:OCP-MIQP}, respectively. 
Then, reusing this matrix factorization, the linear system for the projection step in~\eqref{eq:step} can be solved by
\begin{equation}
\begin{aligned}
N\left(6\,\nx^2 + 8\,\nx \nU + 2\,\nU^2 + 2(\nx+\nU)(m+p)\right), 
\end{aligned}
\label{eq:projection_cost}
\end{equation}
operations for the resulting block-structured forward and backward substitution.
Since $n \le (\nx+\nU)$, and often $n \ll (\nx+\nU)$ due to many auxiliary variables in hybrid systems~\cite{bemporad1999control} for which the Hessian contribution is zero, the cost for a dual objective evaluation is considerably smaller than the projection cost~\eqref{eq:projection_cost}, as anticipated.


\section{Exact Presolve Techniques for Mixed-Integer Optimal Control} \label{sec:PRESOLVE}

A presolve routine is a collection of computationally efficient operations that should be used in each node of the B\&B method (see Figure~\ref{fig:BB_ilu}) before solving the convex relaxation, in order to remove decision variables, and to remove or tighten constraints~\cite{Achterberg2013}.
Presolve techniques are often crucial in strengthening convex relaxations such that typically fewer nodes need to be explored in a B\&B optimization method, sometimes to such an extent that seemingly intractable problems become computationally tractable. We present a collection of tailored variants of presolve techniques with block-sparse structure exploitation for mixed-integer optimal control, based on presolve routines in state-of-the-art solvers for general-purpose MIQPs, e.g., see~\cite{AchterbergBixbyEtAl2019}. 
We explicitly refer to these presolve techniques as \emph{exact} operations, differentiating from the heuristic approach in Section~\ref{sec:HEURISTICS}, and to emphasize that all presolve methods in the present Section preserve feasibility and optimality, i.e., the reduced problem is infeasible or unbounded only if the original problem is infeasible or unbounded, and any feasible or optimal solution of the reduced problem can be mapped to a feasible or optimal solution of the original problem. For example, an exact presolve method will fix a binary variable to $0$ or $1$ only if the method can guarantee that this variable is fixed to that same value in an optimal solution.

\subsection{Block-Structured Domain Propagation}

Several strengthening techniques are implemented as part of ``presolve'' routines in state-of-the-art commercial solvers~\cite{AchterbergBixbyEtAl2019}. One technique that is particularly suitable to mixed-integer optimal control is based on \emph{domain propagation}, in which the goal is to strengthen bound values based on the constraints of the MIQP in~\eqref{eq:OCP-MIQP}. In previous work~\cite{Hespanhol2019}, we suggested to apply domain propagation to the inequality constraints of a condensed MIOCP formulation, i.e., to the smaller but dense problem formulation after numerically eliminating each of the state variables based on the state dynamic constraints in~\eqref{OCP:dyn}. However, based on a simple illustrative example, we show that it can often be advantageous to apply each of the presolve operations, and domain propagation in particular, directly to the block-sparse \change{MIQP} formulation in~\eqref{eq:OCP-MIQP}, including the state variables and state dynamic equality constraints~\eqref{OCP:dyn}. The key insight is that tight lower and upper bounds for state variables can be used to tighten lower and upper bounds on control variables and vice versa.
\begin{Example}
\begin{minipage}{.5\linewidth}
\begin{subequations} \label{eq:OCP-MIQP_ex1}
	\begin{alignat}{5}
	\hspace{-3mm}\underset{X,\,U,\,\delta}{\text{min}} \quad &\sum_{i=0}^{2} \frac{1}{2}\begin{bmatrix}	x_i \\ u_i	\end{bmatrix}^\top \begin{bmatrix} 1 & 0 \\ 0 & 1 \end{bmatrix} \begin{bmatrix}	x_i \\ u_i	\end{bmatrix} && \label{OCP_ex1:obj}\\
	\hspace{-3mm}\text{s.t.} \quad\; 
	& x_0 = 0, 	\label{OCP_ex1:initial}\\
	& x_{i+1} = x_i + u_i, && \hspace{0em} \forall i \in \{0,1\}, \label{OCP_ex1:dyn}\\
	& \begin{bmatrix} -1 \\ -1 \end{bmatrix} \leq \begin{bmatrix} x_i \\ u_i \end{bmatrix} \leq \begin{bmatrix} 1 \\ 1 \end{bmatrix}, \quad &&\hspace{0em} \forall i \in \{0,1,2\}, \label{OCP_ex1:bounds} \\
	& x_2 \ge 2 - 10 \delta, \quad &&\hspace{0em} \delta \in \{0,1\}, \label{OCP_ex1:ineq}
	\end{alignat}
\end{subequations} 
\end{minipage}%
\begin{minipage}{.5\linewidth}
\begin{subequations} \label{eq:OCP-MIQP_ex2}
	\begin{alignat}{5}
	\hspace{-3mm}\underset{U,\,\delta}{\text{min}} \quad &\frac{1}{2} \left(\begin{bmatrix}	u_0 \\ u_1	\end{bmatrix}^\top \begin{bmatrix} 3 & 1 \\ 1 & 2 \end{bmatrix} \begin{bmatrix}	u_0 \\ u_1	\end{bmatrix} + u_2^2 \right) && \label{OCP_ex2:obj}\\
	\hspace{-3mm}\text{s.t.} \quad\; 
	& \begin{bmatrix} -1 \\ -1 \\ -1 \end{bmatrix} \leq \begin{bmatrix} u_0 \\ u_1 \\ u_2 \end{bmatrix} \leq \begin{bmatrix} 1 \\ 1 \\ 1 \end{bmatrix}, \quad &&\hspace{0em}  \label{OCP_ex2:bounds1} \\
	& -1 \leq u_0 + u_1 \leq 1, \quad &&\hspace{0em}  \label{OCP_ex2:bounds2} \\
	& u_0 + u_1 \ge 2 - 10 \delta, \quad &&\hspace{0em} \delta \in \{0,1\}. \label{OCP_ex2:ineq}
	\end{alignat}
\end{subequations} 
\end{minipage}

A standard bound strengthening procedure, as explained in~\cite{AchterbergBixbyEtAl2019,Hespanhol2019}, can be applied to each of the inequality constraints in either the block-sparse \change{MIQP} formulation in~\eqref{eq:OCP-MIQP_ex1} or the equivalent but condensed \change{MIQP} in~\eqref{eq:OCP-MIQP_ex2}. For example, the inequality in~\eqref{OCP_ex1:ineq} can be used to compute a bound on the binary variable $\delta \in \{0,1\}$ as follows:
\begin{equation}
10 \delta \ge 2 - x_2 \quad \underset{x_2 \le 1}{\Longrightarrow} \quad 10 \delta \ge 2 - 1 \quad \Longrightarrow \quad \delta \ge \frac{1}{10} \quad \underset{\delta \in \{0,1\}}{\Longrightarrow} \quad \delta = 1,  \nonumber
\end{equation}
resulting in a fixing of the binary optimization variable $\delta = 1$.
Alternatively, using the condensed inequality in~\eqref{OCP_ex2:ineq}, the same procedure leads to
\begin{equation}
10 \delta \ge 2 - u_0 - u_1 \quad \underset{u_0 \le 1, \; u_1 \le 1}{\Longrightarrow} \quad 10 \delta \ge 2 - 1 - 1 \quad \Longrightarrow \quad \delta \ge 0,  \nonumber
\end{equation}
which means that the binary optimization variable $\delta \in \{0,1\}$ cannot be eliminated in this case, even though the condensed \change{MIQP} in~\eqref{eq:OCP-MIQP_ex2} is equivalent to the block-sparse \change{MIQP} formulation in~\eqref{eq:OCP-MIQP_ex1}.
\qed \label{eq:illust_exam}
\end{Example}

\begin{Remark}
In Example~\ref{eq:illust_exam}, for the condensed inequality in~\eqref{OCP_ex2:ineq}, fixing of the binary variable $\delta = 1$ could be detected by exploiting a bound for the expression $u_0 + u_1 \le 1$ from~\eqref{OCP_ex2:bounds2}. However, as discussed in~\cite[Section~5.4]{AchterbergBixbyEtAl2019}, a bound strengthening procedure considering multiple constraints at once is typically too expensive for MIQP solvers, but more general optimization-based bound tightening~(OBBT) techniques are common for mixed-integer nonlinear programming~(MINLP).
\end{Remark}

Motivated by the above illustrative example, we propose a novel block-sparse variant of domain propagation as described in Algorithm~\ref{alg:Sparse_DP_sweep} that is tailored to the \change{MIQP} formulation in~\eqref{eq:OCP-MIQP}. The method consists of a forward iterative procedure for $i=0,1,\ldots,N$~(see Line~1-14), followed by a backward iterative procedure for $i=N-1,\ldots,0$~(see Line~15-21). The general intuition behind the forward-backward propagation is to have a quick propagation of variable bounds from near the beginning of the prediction horizon towards the end of the prediction horizon, as well as a quick propagation of variable bounds from near the end of the prediction horizon towards the beginning. The proposed forward-backward implementation can reduce the amount of times that the domain propagation in Alg.~\ref{alg:Sparse_DP_sweep} needs to be called in order to achieve a particular amount of bound strengthening in practice, even though such a performance improvement cannot be guaranteed in general. However, by design, it can be guaranteed that the updated bound values $[\ubar{z}_i^+, \bar{z}_i^+]$ that are computed by Alg.~\ref{alg:Sparse_DP_sweep} are tighter than the original bound values $[\ubar{z}_i, \bar{z}_i]$, i.e., $\ubar{z}_i \le \ubar{z}_i^+ \le z_i \le \bar{z}_i^+ \le \bar{z}_i$ for $i \in \{0,1,\ldots,N\}$, without eliminating any feasible solution of the \change{MIQP} in~\eqref{eq:OCP-MIQP} when replacing the bound values by the updated values $[\ubar{z}_i^+, \bar{z}_i^+]$ in~\eqref{OCP:bounds}. Algorithm~\ref{alg:Sparse_DP_sweep} can result in strengthening of bound values for both continuous and integer/binary optimization variables. In addition, the tailored block-sparse \change{MIQP} structure exploitation in Alg.~\ref{alg:Sparse_DP_sweep} considerably reduces the computational cost for each operation in the domain propagation.

\begin{algorithm}[h]
	\caption{Block-sparse forward-backward operations for domain propagation and bound strengthening}
	\label{alg:Sparse_DP_sweep}
		\begin{algorithmic}[1]
			\Require Bound values $[\ubar{z}_{i},\bar{z}_{i}]$, $E_i = \left[ C_i \; D_i \right]$, $F_i = \left[ A_i \; B_i \right]$, $i \in \{0,\ldots,N\}$ and \change{MIQP} of the form~\eqref{eq:OCP-MIQP}.
			\For{$i = 0, \ldots, N$} \Comment{Forward domain propagation}
			\For{$j = 1, \ldots, (\nx+\nU)$} \Comment{Affine inequality constraints: $\blo_{i} \leq E_{i}z_{i} \leq \bup_{i}$}
			\State $\ubar{z}_{i,j} \gets \min\{ z_{i,j}: \blo_{i} \leq E_{i}z_{i} \leq \bup_{i},  \;\; \ubar{z}_{i} \leq z_{i} \leq \bar{z}_{i} \}$, see Alg.~\ref{alg:min_max}
			\State $\bar{z}_{i,j} \gets \max\{ z_{i,j}: \blo_{i} \leq E_{i}z_{i} \leq \bup_{i},  \;\; \ubar{z}_{i} \leq z_{i} \leq \bar{z}_{i} \}$, see Alg.~\ref{alg:min_max}
			\IfThen{$\bar{z}_{i,j} - \ubar{z}_{i,j} < -\epsilon$\;}{\Return \texttt{infeasible\_flag}} \Comment{\change{MIQP} subproblem infeasible}
			\EndFor
			\If{$i < N$} 
			\State $\ubar{a}_i \gets a_{i} + \left(F_i^+ \ubar{z}_{i} + F_i^- \bar{z}_{i}\right)$ and $\bar{a}_i \gets a_{i} + \left(F_i^+ \bar{z}_{i} + F_i^- \ubar{z}_{i}\right)$.
			\For{$j = 1, \ldots, \nx$} \Comment{State dynamic constraints: $x_{i+1} = a_{i} + F_{i}z_{i}$}
			\State $\ubar{x}_{i+1,j} \gets \max(\ubar{x}_{i+1,j},\ubar{a}_{i,j})$ and $\bar{x}_{i+1,j} \gets \min(\bar{x}_{i+1,j},\bar{a}_{i,j})$.
			\IfThen{$\bar{x}_{i+1,j} - \ubar{x}_{i+1,j} < -\epsilon$\;}{\Return \texttt{infeasible\_flag}} \Comment{\change{MIQP} subproblem infeasible}
			\EndFor
			\EndIf
			\EndFor
			\For{$i = N-1, \ldots, 0$} \Comment{Backward domain propagation}
			\For{$j = 1, \ldots, (\nx+\nU)$} \Comment{State dynamic constraints: $x_{i+1} = a_{i} + F_{i}z_{i}$}
			\State $\ubar{z}_{i,j} \gets \min\{ z_{i,j}: x_{i+1} - a_{i} - F_{i}z_{i} = 0,  \;\; \ubar{z}_{i} \leq z_{i} \leq \bar{z}_{i} \}$
			\State $\bar{z}_{i,j} \gets \max\{ z_{i,j}: x_{i+1} - a_{i} - F_{i}z_{i} = 0,  \;\; \ubar{z}_{i} \leq z_{i} \leq \bar{z}_{i} \}$
			\IfThen{$\bar{z}_{i,j} - \ubar{z}_{i,j} < -\epsilon$\;}{\Return \texttt{infeasible\_flag}} \Comment{\change{MIQP} subproblem infeasible}
			\EndFor
			\EndFor
			\Ensure Updated bound values $[\ubar{z}_{i}^+,\bar{z}_{i}^+], \; i \in \{0,\ldots,N\}$.
		\end{algorithmic}
\end{algorithm}

Each iteration of the forward procedure for $i=0,1,\ldots,N$~(Line~1-14) performs domain propagation for each variable $z_{i,j}$ for $j=1,\ldots,\nx+\nU$ based on the affine inequality constraints $\blo_{i} \leq E_{i}z_{i} \leq \bup_{i}$~(see Line~2-6), followed by domain propagation for each state variable $x_{i+1,j}$ for $j=1,\ldots,\nx$ based on the state dynamic equality constraints $x_{i+1} = a_{i} + F_{i}z_{i}$~(see Line~7-13). The computation of $\ubar{z}_{i,j}$ and $\bar{z}_{i,j}$ on Line~3 and~4, respectively, is defined using a compact OBBT notation. However, as described in the next section, we instead perform a single-row approximation for each inequality constraint individually, in order to reduce the computational cost for each iteration of the domain propagation. An \change{MIQP} subproblem is detected to be infeasible whenever the gap between a lower and upper bound value is below a particular threshold value $-\epsilon$, where $\epsilon$ is a small positive value, for example, see Lines~5, 11 and 19. The computation of $\ubar{a}_i = \min\{ a_{i} + F_{i}z_{i} \} = a_{i} + \left(F_i^+ \ubar{z}_{i} + F_i^- \bar{z}_{i}\right)$ and $\bar{a}_i = \max\{ a_{i} + F_{i}z_{i} \} = a_{i} + \left(F_i^+ \bar{z}_{i} + F_i^- \ubar{z}_{i}\right)$ on Line~8 is used for domain propagation based on the state dynamics, where $F_i^+, F_i^-$ contain all positive and negative elements of the matrix $F_i = F_i^+ + F_i^-$, respectively. Each iteration of the backward procedure $i=N-1,\ldots,0$~(see Line~15-21) performs domain propagation for each variable $z_{i,j}$ for $j=1,\ldots,\nx+\nU$ based on the state dynamic equality constraints $x_{i+1} = a_{i} + F_{i}z_{i}$. Similarly, OBBT is used to update the bound values $\ubar{z}_{i,j}$ and $\bar{z}_{i,j}$ on Line~17 and~18, respectively, but a computationally efficient single-row approximation can be used.

\subsection{Approximation of Optimization-based Bound Tightening}

The computation of bound values $\ubar{z}_{i,j}$, $\bar{z}_{i,j}$ on Lines 3-4 and Lines 17-18 of Alg.~\ref{alg:Sparse_DP_sweep} require the solution of a linear programming~(LP) problem. Since this operation needs to be performed for each variable on each time step in the horizon and for each domain propagation call in each iteration of the presolve routine, it is necessary to perform a computationally cheap single-row approximation instead as implemented in Alg.~\ref{alg:min_max} based on the following lemma.

\begin{Lemma}
The updated bound values $\ubar{z}_{i,j}^+$, $\bar{z}_{i,j}^+$ that are computed by Alg.~\ref{alg:min_max} are a conservative approximation of optimization-based bound tightening, i.e., the following relationships hold
\begin{subequations} \label{eq:OBBT1}
	\begin{alignat}{5}
\ubar{z}_{i,j}^+ &= \underset{k \in \{1,\ldots,\nc\}}{\max}\left( \min\{ z_{i,j}: \blo_{i,k} \leq E_{i,k,:}z_{i} \leq \bup_{i,k},  \;\; \ubar{z}_{i} \leq z_{i} \leq \bar{z}_{i} \} \right) \label{eq:OBBT1-1} \\
&\le \; \min\{ z_{i,j}: \blo_{i} \leq E_{i}z_{i} \leq \bup_{i},  \;\; \ubar{z}_{i} \leq z_{i} \leq \bar{z}_{i} \} \quad\le\; \min\{ z_{i,j}: \eqref{OCP:dyn}-\eqref{OCP:int} \}, \label{eq:OBBT1-2}
\end{alignat}
\end{subequations}
where $E_{i,k,:}$ denotes the $k^{\text{th}}$ row of the matrix $E_i$, and similarly for the updated upper bound values
\begin{subequations} \label{eq:OBBT2}
	\begin{alignat}{5}
	\bar{z}_{i,j}^+ &= \underset{k \in \{1,\ldots,\nc\}}{\min}\left( \max\{ z_{i,j}: \blo_{i,k} \leq E_{i,k,:}z_{i} \leq \bup_{i,k},  \;\; \ubar{z}_{i} \leq z_{i} \leq \bar{z}_{i} \} \right) \label{eq:OBBT2-1} \\
	&\ge \; \max\{ z_{i,j}: \blo_{i} \leq E_{i}z_{i} \leq \bup_{i},  \;\; \ubar{z}_{i} \leq z_{i} \leq \bar{z}_{i} \} \quad\ge\; \max\{ z_{i,j}: \eqref{OCP:dyn}-\eqref{OCP:int} \}. \label{eq:OBBT2-2}
	\end{alignat}
\end{subequations}
\end{Lemma}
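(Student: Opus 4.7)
The plan is to split the proof into two parts: first, verify algorithmically that Algorithm~\ref{alg:min_max} produces $\ubar{d}_k$ and $\bar{d}_k$ matching the single-row LP bounds, which establishes the equalities in \eqref{eq:OBBT1-1} and \eqref{eq:OBBT2-1}; second, derive the remaining inequalities in \eqref{eq:OBBT1-2} and \eqref{eq:OBBT2-2} from monotonicity of the optimal value under constraint inclusion. Since the upper-bound statement \eqref{eq:OBBT2} mirrors \eqref{eq:OBBT1} under a sign flip, I would develop the argument for the lower bound and then appeal to symmetry.

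For the algorithmic correctness, I would argue row-by-row with a case split on the sign of $E_{i,k,j}$. When $E_{i,k,j} > \epsilon$, rearranging the left side of the row inequality gives $z_{i,j} \ge (\blo_{i,k} - \sum_{l \ne j} E_{i,k,l} z_{i,l})/E_{i,k,j}$, and the tightest such bound over $z_{i,l} \in [\ubar{z}_{i,l}, \bar{z}_{i,l}]$ arises when the inner sum is maximized. The precomputed $\tilde{\blo}_k$ on Line~1 already encodes that maximization over all coordinates via the $E_i^+/E_i^-$ decomposition, so adding back the $j$-th coordinate contribution $E_{i,k,j} \bar{z}_{i,j}$ that is already subsumed in $\tilde{\blo}_k$ and dividing by $E_{i,k,j}$ reproduces the expression for $\ubar{d}_k$ on Line~5. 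The case $E_{i,k,j} < -\epsilon$ is symmetric via $\tilde{\bup}_k$ and the right side of the row inequality, while $|E_{i,k,j}| \le \epsilon$ leaves $\ubar{d}_k = -\infty$, consistently providing no information. A brief feasibility check shows that the extremal non-pivot assignment together with $z_{i,j} = \ubar{d}_k$ attains $E_{i,k,:} z_i = \blo_{i,k}$, which automatically satisfies the companion right side, so the single-row LP is feasible and its optimum equals $\max(\ubar{z}_{i,j}, \ubar{d}_k)$. Taking $\max$ over $k$ and intersecting with $\ubar{z}_{i,j}$ on Line~11 then reconstructs \eqref{eq:OBBT1-1}.

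The inequalities in \eqref{eq:OBBT1-2} follow directly from monotonicity of the optimal value under constraint addition: each single-row feasible set contains the all-rows stage-$i$ feasible set, so each single-row minimum lower-bounds the all-rows minimum and $\max_k$ preserves that bound; moreover, the projection onto $z_i$ of the full MIOCP feasible set \eqref{OCP:dyn}--\eqref{OCP:int} is contained in the stage-$i$ box-plus-inequality set (the MIOCP additionally enforces dynamics across stages and integrality), yielding the outer inequality. The integer-rounding step on Line~10 only tightens $\ubar{z}_{i,j}^+$ upward to an integer-valid value, which still lower-bounds any integer-feasible $z_{i,j}$, so the entire chain is preserved. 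I expect the main obstacle to be the careful sign bookkeeping in the single-row analysis and the simultaneous feasibility certificate across both sides of the two-sided row constraint; once that algorithmic correctness is established, the monotonicity steps are routine.
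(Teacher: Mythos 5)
Your proof is correct, and for the inequality chains \eqref{eq:OBBT1-2} and \eqref{eq:OBBT2-2} it is the same argument as the paper's: the optimal value of a minimization problem can only decrease when constraints are removed, applied once to pass from the full MIOCP constraint set \eqref{OCP:dyn}--\eqref{OCP:int} to the stage-$i$ constraints, and once to pass from all rows of $\blo_{i} \leq E_{i}z_{i} \leq \bup_{i}$ to a single row $k$; taking the max (resp.\ min) over $k$ then preserves the bound. Where you go beyond the paper is the equalities \eqref{eq:OBBT1-1} and \eqref{eq:OBBT2-1}: the paper's proof never argues these (they are implicitly taken as the postcondition stated in the Ensure line of Alg.~\ref{alg:min_max}), whereas you verify row by row that Lines 1, 5 and 7 of the algorithm reproduce the closed-form optimum of each single-row LP. Your sign bookkeeping there is right: for $E_{i,k,j} > \epsilon$ the box-maximum of the off-pivot terms equals $\left(E_i^+ \bar{z}_{i} + E_i^- \ubar{z}_{i}\right)_k - E_{i,k,j}\bar{z}_{i,j}$, which gives exactly $\ubar{d}_k = (\tilde{\blo}_k + E_{i,k,j}\bar{z}_{i,j})/E_{i,k,j}$ as on Line 5. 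So your proof is strictly more complete than the paper's two-line argument, at the cost of the extra single-row analysis.

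Two caveats on that extra part, neither of which the paper addresses either. First, your feasibility certificate places $z_{i,j} = \ubar{d}_k$, which need not lie in $[\ubar{z}_{i,j}, \bar{z}_{i,j}]$; the clean statement is that the set of feasible pivot values is the interval $[\max(\ubar{z}_{i,j},\ubar{d}_k),\, \min(\bar{z}_{i,j},\bar{d}_k)]$, so the single-row LP minimum equals $\max(\ubar{z}_{i,j},\ubar{d}_k)$ whenever this interval is nonempty, the empty case being precisely the one that triggers the infeasibility flag in Alg.~\ref{alg:Sparse_DP_sweep}. Second, the rounding on Line 10 can lift $\ubar{z}_{i,j}^+$ strictly above the relaxed LP minima, so after rounding the equality \eqref{eq:OBBT1-1} and the first inequality in \eqref{eq:OBBT1-2} may fail for integer variables; what survives is only the outer bound against the integer-constrained set \eqref{OCP:dyn}--\eqref{OCP:int}, which is all the conservativeness claim needs, but is weaker than your phrase ``the entire chain is preserved.''
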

\begin{proof}
The relationships in~\eqref{eq:OBBT1-2} follow directly from the following inequalities
\begin{equation*}
\min\{ z_{i,j}: \blo_{i,k} \leq E_{i,k,:}z_{i} \leq \bup_{i,k},  \;\; \ubar{z}_{i} \leq z_{i} \leq \bar{z}_{i} \} \;\; \le \;\; \min\{ z_{i,j}: \blo_{i} \leq E_{i}z_{i} \leq \bup_{i},  \;\; \ubar{z}_{i} \leq z_{i} \leq \bar{z}_{i} \} \;\;\le\;\; \min\{ z_{i,j}: \eqref{OCP:dyn}-\eqref{OCP:int} \},
\end{equation*}
which holds for each $k \in \{1,\ldots,\nc\}$, due to the fact that an optimal objective value of a minimization problem only decreases after removing one or multiple constraints. The relationships in~\eqref{eq:OBBT2-2} can be proved by a similar argument. 
\end{proof}

A compact notation is used in Alg.~\ref{alg:min_max} for matrices $E_i^+, E_i^-$ which contain all positive and negative elements of the matrix $E_i = E_i^+ + E_i^-$, respectively. Line~10 of Alg.~\ref{alg:min_max} includes rounding up $\lceil \ubar{d} \rceil$ and rounding down $\lfloor \bar{d} \rfloor$ of the lower and upper bound values, respectively, if $z_{i,j}$ is an integer or binary optimization variable.

\begin{algorithm}[h]
	\caption{Single-row approximation of optimization-based bound tightening for variable $z_{i,j}$}
	\label{alg:min_max}
		\begin{algorithmic}[1]
			\Require Bound values $[\ubar{z}_{i},\bar{z}_{i}]$, $E_i = \left[ C_i \; D_i \right]$, $i \in \{0,\ldots,N\}$, $j \in \{1, \ldots, n_{\mathrm{z}}\}$ and \change{MIQP} of the form~\eqref{eq:OCP-MIQP}.
			\State $\tilde{\bup} \gets \bup_{i} - \left(E_i^+ \ubar{z}_{i} + E_i^- \bar{z}_{i}\right)$ and $\tilde{\blo} \gets \blo_{i} - \left(E_i^+ \bar{z}_{i} + E_i^- \ubar{z}_{i}\right)$.
			\State $\ubar{d} \gets -\infty$ and $\bar{d} \gets \infty$.
			\For{$k = 1, \ldots, \nc$} \Comment{Affine inequality constraints: $\blo_{i} \leq E_{i}z_{i} \leq \bup_{i}$}
			\If{$E_{i,k,j} > \epsilon$}
			\State $\ubar{d}_k \gets \frac{\tilde{\blo}_k + E_{i,k,j} \bar{z}_{i,j}}{E_{i,k,j}}$ and $\bar{d}_k \gets \frac{\tilde{\bup}_k + E_{i,k,j} \ubar{z}_{i,j}}{E_{i,k,j}}$.
			\ElsIf{$E_{i,k,j} < -\epsilon$}
			\State $\ubar{d}_k \gets \frac{\tilde{\bup}_k + E_{i,k,j} \bar{z}_{i,j}}{E_{i,k,j}}$ and $\bar{d}_k \gets \frac{\tilde{\blo}_k + E_{i,k,j} \ubar{z}_{i,j}}{E_{i,k,j}}$.
			\EndIf
			\EndFor
			\IfThen{$z_{i,j}$ is integer}{$\ubar{d} \gets \lceil \ubar{d} \rceil$ and $\bar{d} \gets \lfloor \bar{d} \rfloor$.} \Comment{Rounding integer variable bounds}
			\State $\ubar{z}_{i,j}^+ \gets \max(\ubar{z}_{i,j},\underset{k}{\max}(\ubar{d}_k))$ and $\bar{z}_{i,j}^+ \gets \min(\bar{z}_{i,j},\underset{k}{\min}(\bar{d}_k))$.
			\Ensure $\ubar{z}_{i,j}^+ = \underset{k \in \{1,\ldots,\nc\}}{\max}\left( \min\{ z_{i,j}: \blo_{i,k} \leq E_{i,k,:}z_{i} \leq \bup_{i,k},  \;\; \ubar{z}_{i} \leq z_{i} \leq \bar{z}_{i} \} \right)$,
			\Statex \hspace{7mm} $\bar{z}_{i,j}^+ = \underset{k \in \{1,\ldots,\nc\}}{\min}\left( \max\{ z_{i,j}: \blo_{i,k} \leq E_{i,k,:}z_{i} \leq \bup_{i,k},  \;\; \ubar{z}_{i} \leq z_{i} \leq \bar{z}_{i} \} \right)$.
		\end{algorithmic}
\end{algorithm}



\subsection{Trivial Constraints and Dual Fixings}

Algorithm~\ref{alg:Process_Inequalities} illustrates how single-row bounding for each of the affine inequality constraints can be used to detect an \change{MIQP} subproblem to be infeasible~(see Line~3), for example, if
\begin{equation}
\min\{ E_{i,k,:}z_{i}:  \;\; \ubar{z}_{i} \leq z_{i} \leq \bar{z}_{i} \} = E_{i,k,:}^+ \ubar{z}_{i} + E_{i,k,:}^- \bar{z}_{i} > \bup_{i,k},
\end{equation}
where $E_{i,k,:}$ denotes the $k^{\text{th}}$ row of the matrix $E_i$, and similarly for the lower bound of $\blo_{i,k} \leq E_{i,k,:}z_{i} \leq \bup_{i,k}$, $k \in \{1,\ldots,\nc\}$. In addition, single-row bounding can be used to detect lower and/or upper bounds of an affine inequality constraint to be redundant~(see Line~5-6), for example,
\begin{equation}
\bup_{i,k} \ge \max\{ E_{i,k,:}z_{i}:  \;\; \ubar{z}_{i} \leq z_{i} \leq \bar{z}_{i} \} = E_{i,k,:}^+ \bar{z}_{i} + E_{i,k,:}^- \ubar{z}_{i} \qquad \Longrightarrow \quad \bup_{i,k} \gets \infty \quad \text{(upper bound redundant)},
\end{equation}
and similarly for the lower bound of each affine inequality constraint $\blo_{i,k} \leq E_{i,k,:}z_{i} \leq \bup_{i,k}$, $k \in \{1,\ldots,\nc\}$. Finally, if an optimization variable does not enter the state dynamic equality constraints~(see Line~9 of Alg.~\ref{alg:Process_Inequalities}) and if the variable enters each of the non-redundant inequality constraints with a coefficient of the same sign, then the variable can be fixed to the lower~(Line~10) or the upper bound~(Line~11) value, depending on the sign of the constraint coefficients and the sign in the cost function. Lines~7-13 of Alg.~\ref{alg:Process_Inequalities} form a tailored block-sparse variant of \emph{dual fixing}, which is described more generally in~\cite{AchterbergBixbyEtAl2019}.

	\begin{algorithm}[h]
		\caption{Block-sparse redundant inequality constraint detection and dual fixings}
		\label{alg:Process_Inequalities}
		\begin{algorithmic}[1]
			\Require Bound values $[\ubar{z}_{i},\bar{z}_{i}]$, $E_i = \left[ C_i \; D_i \right]$, $[\blo_{i},\bup_{i}], \; i \in \{0,\ldots,N\}$ and \change{MIQP} of the form~\eqref{eq:OCP-MIQP}.
			\For{$i = 0, \ldots, N$} 
			\State $\tilde{\bup} \gets \bup_{i} - \left(E_i^+ \ubar{z}_{i} + E_i^- \bar{z}_{i}\right)$ and $\tilde{\blo} \gets \blo_{i} - \left(E_i^+ \bar{z}_{i} + E_i^- \ubar{z}_{i}\right)$. \Comment{Inequality constraints: $\blo_{i} \leq E_{i}z_{i} \leq \bup_{i}$}
			\IfThen{$\underset{k}{\min}(\tilde{\bup}_k) < -\epsilon \; \lor \; \underset{k}{\max}(\tilde{\blo}_k) > \epsilon$\;}{\Return \texttt{infeasible\_flag}} \Comment{\change{MIQP} subproblem infeasible}
			\State $\tilde{\bup} \gets \bup_{i} - \left(E_i^+ \bar{z}_{i} + E_i^- \ubar{z}_{i}\right)$ and $\tilde{\blo} \gets \blo_{i} - \left(E_i^+ \ubar{z}_{i} + E_i^- \bar{z}_{i}\right)$.
			\State $\bup_{i,k} \gets \infty$ \textbf{if} $\tilde{\bup}_k > -\epsilon, \quad \forall k = 1, \ldots, \nc$ \Comment{Upper bound constraint redundant}
			\State $\blo_{i,k} \gets -\infty$ \textbf{if} $\tilde{\blo}_k < \epsilon, \quad \forall k = 1, \ldots, \nc$ \Comment{Lower bound constraint redundant}
			\For{$j = 1, \ldots, \nU$}
			\State $\tilde{D} \gets \left[ \{ D_{i,k,j} \}_{\forall k: \bup_{i,k} \,<\, \epsilon_{\mathrm{max}}} \quad \{-D_{i,k,j} \}_{\forall k: \blo_{i,k} \,>\, -\epsilon_{\mathrm{max}}} \right]$. \Comment{Non-redundant inequality constraints}
			\If{$\Vert B_{i, :,j} \Vert < \epsilon \; \land \; \Vert H_{i, :,\nx+j} \Vert < \epsilon \; \land \; \Vert \bar{u}_{i,j} - \ubar{u}_{i,j} \Vert > \epsilon$}
			\IfThen{$\underset{k}{\min}(\tilde{D}_k) > -\epsilon \; \land \; r_{i,j} > -\epsilon$}{$\bar{u}_{i,j} \gets \ubar{u}_{i,j}$} \Comment{Fixing auxiliary variable to lower bound}
			\IfThen{$\underset{k}{\max}(\tilde{D}_k) < \epsilon \; \land \; r_{i,j} < \epsilon$}{$\ubar{u}_{i,j} \gets \bar{u}_{i,j}$} \Comment{Fixing auxiliary variable to upper bound}
			\EndIf
			\EndFor
			\EndFor
			\Ensure Updated bound values $[\ubar{z}_{i},\bar{z}_{i}]$ and $[\blo_{i},\bup_{i}], \; i \in \{0,\ldots,N\}$.
		\end{algorithmic}
	\end{algorithm}


\subsection{Constraint Coefficient Strengthening}

In \emph{coefficient strengthening}, we aim to modify coefficients of the affine inequality constraints~\eqref{OCP:ineq} to tighten the convex QP relaxation without affecting the integer-feasible \change{MIQP} solutions. We present a block-sparse variant of coefficient strengthening in Alg.~\ref{alg:Coefficient_Strengthening}, as described more generally in~\cite{Savelsbergh1994}. Let us define constraint domination as follows:
\begin{Definition}[see~\cite{AchterbergBixbyEtAl2019,Savelsbergh1994}]
Given two inequality constraints $e_1^\top z \le f_1$ and $e_2^\top z \le f_2$, where $e_1^\top$ and $e_2^\top$ denote row vectors, $z \in \R^{n}$ for which $z_{j} \in \Z, \; \forall j \in \I$ and $\ubar{z} \le z \le \bar{z}$, then $e_1^\top z \le f_1$ \emph{dominates} $e_2^\top z \le f_2$ if
\begin{equation}
\{ z \in \R^{n} \; | \; \ubar{z} \le z \le \bar{z}, \; e_1^\top z \le f_1 \} \quad \subset \quad \{ z \in \R^{n} \; | \; \ubar{z} \le z \le \bar{z}, \; e_2^\top z \le f_2 \},
\end{equation}
i.e., the feasible region of the convex QP relaxation for $e_1^\top z \le f_1$ is strictly contained in the feasible region for $e_2^\top z \le f_2$, given the variable bound values $\ubar{z} \le z \le \bar{z}$.
\end{Definition}	
Using the latter definition, the overall aim of coefficient strengthening is to reformulate one or multiple affine inequality constraints~\eqref{OCP:ineq} as constraints that \emph{dominate} the original inequality constraints, without removing any feasible \change{MIQP} solution.
We demonstrate the idea of coefficient strengthening based on following simple illustrative example.
\begin{Example}
	\label{ex:coeffStrength}
Let us consider an MIQP with a continuous variable $x \in \R$, which is bounded $1 \le x \le 3$, and a binary variable $\delta \in \{0,1\}$. Coefficient strengthening for an inequality constraint $2 \le x + 100\, \delta$ then results in a dominating constraint $2 \le x + \delta$, i.e., 
\begin{equation}
\{ x \in \R, \,\delta \in \R \; | \; 1 \le x \le 3, \; 0 \le \delta \le 1, \; 2 \le x + \delta \} \quad \subset \quad \{ x \in \R, \,\delta \in \R \; | \; 1 \le x \le 3, \; 0 \le \delta \le 1, \; 2 \le x + 100\, \delta \}. \label{ex:bigM}
\end{equation}
\end{Example}
Mixed-integer inequality constraints of the form in~\eqref{ex:bigM} are common, e.g., when using a ``big-M'' formulation~\cite{Trespalacios2015} in MIOCPs, such that coefficient strengthening can be used to automatically reduce the large coefficient value $M>0$ and tighten the convex QP relaxations.
Algorithm~\ref{alg:Coefficient_Strengthening} describes a systematic approach to strengthen the coefficients of each affine inequality constraint $\blo_{i,k} \leq E_{i,k,:}z_{i} \leq \bup_{i,k}$, $k \in \{1,\ldots,\nc\}$, with respect to each integer or binary optimization variable $u_{i,j}, \, \forall j \in \I_i$ in~\eqref{OCP:int}.

\begin{algorithm}[h]
	\caption{Block-sparse inequality constraint coefficient strengthening}
	\label{alg:Coefficient_Strengthening}
		\begin{algorithmic}[1]
			\Require Bound values $[\ubar{z}_{i},\bar{z}_{i}]$, $E_i = \left[ C_i \; D_i \right]$, $[\blo_{i},\bup_{i}], \; i \in \{0,\ldots,N\}$ and \change{MIQP} of the form~\eqref{eq:OCP-MIQP}.
			\For{$i = 0, \ldots, N$} 
			\State $\tilde{\bup} \gets \bup_{i} - \left(E_i^+ \bar{z}_{i} + E_i^- \ubar{z}_{i}\right)$ and $\tilde{\blo} \gets \left(E_i^+ \ubar{z}_{i} + E_i^- \bar{z}_{i}\right) - \blo_{i}$. \Comment{Inequality constraints: $\blo_{i} \leq E_{i}z_{i} \leq \bup_{i}$}
			\For{$j = 1, \ldots, \nU$ \textbf{and if} $u_{i,j}$ is integer $\land \; \Vert \bar{u}_{i,j} - \ubar{u}_{i,j} \Vert > \epsilon$ \textbf{then}}
			\For{$k = 1, \ldots, \nc$}
			\If{$\bup_{i,k} \,<\, \epsilon_{\mathrm{max}} \; \land \; \blo_{i,k} \,<\, -\epsilon_{\mathrm{max}}$}
			\State $\tilde{d} \gets \tilde{\bup}_k + | D_{i,k,j} | \left( \bar{u}_{i,j} - \ubar{u}_{i,j} \right)$.
			\If{$\tilde{d} > \epsilon \; \land \; D_{i,k,j} > \epsilon$}
			\State $D_{i,k,j} \gets D_{i,k,j} - \tilde{d}$ and $\bup_{i,k} \gets \bup_{i,k} - \tilde{d} \, \bar{u}_{i,j}$. \Comment{Strengthening of coefficient $D_{i,k,j}$}
			\ElsIf{$\tilde{d} > \epsilon \; \land \; D_{i,k,j} < -\epsilon$}
			\State $D_{i,k,j} \gets D_{i,k,j} + \tilde{d}$ and $\bup_{i,k} \gets \bup_{i,k} + \tilde{d} \, \ubar{u}_{i,j}$. \Comment{Strengthening of coefficient $D_{i,k,j}$}
			\EndIf
			\ElsIf{$\blo_{i,k} \,>\, -\epsilon_{\mathrm{max}} \; \land \; \bup_{i,k} \,>\, \epsilon_{\mathrm{max}}$}
			\State $\tilde{d} \gets \tilde{\blo}_k + | D_{i,k,j} | \left( \bar{u}_{i,j} - \ubar{u}_{i,j} \right)$.
			\If{$\tilde{d} > \epsilon \; \land \; D_{i,k,j} > \epsilon$}
			\State $D_{i,k,j} \gets D_{i,k,j} - \tilde{d}$ and $\blo_{i,k} \gets \blo_{i,k} - \tilde{d} \, \ubar{u}_{i,j}$. \Comment{Strengthening of coefficient $D_{i,k,j}$}
			\ElsIf{$\tilde{d} > \epsilon \; \land \; D_{i,k,j} < -\epsilon$}
			\State $D_{i,k,j} \gets D_{i,k,j} + \tilde{d}$ and $\blo_{i,k} \gets \blo_{i,k} + \tilde{d} \, \bar{u}_{i,j}$. \Comment{Strengthening of coefficient $D_{i,k,j}$}
			\EndIf
			\EndIf
			\EndFor
			\EndFor
			\EndFor
			\Ensure Updated constraint bound values $[\blo_{i},\bup_{i}]$ and matrices $D_i, \; i \in \{0,\ldots,N\}$.
		\end{algorithmic}
\end{algorithm}

\subsection{Binary Variable Probing}

The general idea of probing is to select a binary variable, which is set tentatively to zero or one in order to derive further variable fixings and/or tightened inequality constraints, see~\cite{AchterbergBixbyEtAl2019,Savelsbergh1994}. For example, let $u_{i,j} \in \{0,1\}$ be a binary variable, and let us define the lower and upper bounds $\ubar{z}_{i}^0 \leq z_{i} \leq \bar{z}_{i}^0$ for $i=0,\ldots,N$ that have been deduced from setting $u_{i,j}=0$ using one or multiple iterations of presolve operations, e.g., using the forward-backward domain propagation in Alg.~\ref{alg:Sparse_DP_sweep}. Similarly, the lower and upper bounds $\ubar{z}_{i}^1 \leq z_{i} \leq \bar{z}_{i}^1$ for $i=0,\ldots,N$ have been deduced from setting $u_{i,j}=1$. Our tailored implementation of binary variable probing in Alg.~\ref{alg:Probing} is based on following observations:
	\begin{itemize}
		\item If both $u_{i,j}=0$ and $u_{i,j}=1$ leads to an infeasible problem $\quad\Longrightarrow\quad$ problem is infeasible
		\item If $u_{i,j}=0$ leads to infeasible problem $\quad\Longrightarrow\quad$ $u_{i,j}=1$ and $\ubar{z}_{i,k} = \ubar{z}_{i,k}^1$, $\bar{z}_{i,k} = \bar{z}_{i,k}^1$, $i=0,\ldots,N, \; k = 1, \ldots, (\nx+\nU)$
		\item If $u_{i,j}=1$ leads to infeasible problem $\quad\Longrightarrow\quad$ $u_{i,j}=0$ and $\ubar{z}_{i,k} = \ubar{z}_{i,k}^0$, $\bar{z}_{i,k} = \bar{z}_{i,k}^0$, $i=0,\ldots,N, \; k = 1, \ldots, (\nx+\nU)$
		\item Bound values updated for each variable $\ubar{z}_{i,k} = \min\{\ubar{z}_{i,k}^0,\ubar{z}_{i,k}^1\}$ and $\bar{z}_{i,k} = \max\{\bar{z}_{i,k}^0,\bar{z}_{i,k}^1\}$, $i=0,\ldots,N, \; k = 1, \ldots, (\nx+\nU)$
		\item If $\exists k: \ubar{z}_{i,k}^0 = \bar{z}_{i,k}^0$ and $\ubar{z}_{i,k}^1 = \bar{z}_{i,k}^1$ $\quad\Longrightarrow\quad$ $z_{i,k}$ can be substituted as $z_{i,k} = \bar{z}_{i,k}^0+(\bar{z}_{i,k}^1-\bar{z}_{i,k}^0) \,u_{i,j}$
	\end{itemize}
For simplicity, the last observation is not included in Alg.~\ref{alg:Probing}.
Binary variable probing is a relatively simple strategy that can be very effective at reducing the B\&B search tree, but it can become computationally expensive. Therefore, a limit on the number of probing iterations, $n < n_{\mathrm{probing}}$~(see Line~5 and Line~11), and/or a timeout is needed to ensure computational efficiency.

\begin{algorithm}[h]
	\caption{Block-sparse binary variable probing procedure}
	\label{alg:Probing}
		\begin{algorithmic}[1]
			\Require Bound values $[\ubar{z}_{i},\bar{z}_{i}], \; i \in \{0,\ldots,N\}$ and \change{MIQP} of the form~\eqref{eq:OCP-MIQP}.
			\For{$i = 0, \ldots, N$} 
			\For{$j = 1, \ldots, \nU$ \textbf{and if} $u_{i,j}$ is binary $\land \; \Vert \bar{z}_{i,\nx+j} - \ubar{z}_{i,\nx+j} \Vert > \epsilon$ \textbf{then}}
			\State $[\ubar{z}^0,\bar{z}^0] \gets [\ubar{z},\bar{z}]$ and $\bar{z}^0_{i,\nx+j} \gets \ubar{z}^0_{i,\nx+j}$. \Comment{Probing variable $u_{i,j}$ to lower bound}
			\State \texttt{infeasible}\_0 $\gets$ \texttt{false} and $n \gets 0$.
			\While{\texttt{sufficient\_progress} $\; \land \;$ !\texttt{infeasible}\_0 $\; \land \;$ $n < n_{\mathrm{probing}}$ }
			\State Update bound values $[\ubar{z}^0_{i},\bar{z}^0_{i}], \; i \in \{0,\ldots,N\}$ using forward-backward process in Alg.~\ref{alg:Sparse_DP_sweep}.
			\State \texttt{infeasible}\_0 $\gets$ \texttt{infeasible\_flag} for $[\ubar{z}^0,\bar{z}^0]$ and $n \gets n+1$.
			\EndWhile
			\State $[\ubar{z}^1,\bar{z}^1] \gets [\ubar{z},\bar{z}]$ and $\ubar{z}^1_{i,\nx+j} \gets \bar{z}^1_{i,\nx+j}$. \Comment{Probing variable $u_{i,j}$ to upper bound}
			\State \texttt{infeasible}\_1 $\gets$ \texttt{false} and $n \gets 0$.
			\While{\texttt{sufficient\_progress} $\; \land \;$ !\texttt{infeasible}\_1 $\; \land \;$ $n < n_{\mathrm{probing}}$ }
			\State Update bound values $[\ubar{z}^1_{i},\bar{z}^1_{i}], \; i \in \{0,\ldots,N\}$ using forward-backward process in Alg.~\ref{alg:Sparse_DP_sweep}.
			\State \texttt{infeasible}\_1 $\gets$ \texttt{infeasible\_flag} for $[\ubar{z}^1,\bar{z}^1]$ and $n \gets n+1$.
			\EndWhile
			\If{\texttt{infeasible}\_0 $\quad \land \quad$ \texttt{infeasible}\_1}
			\State \Return \texttt{infeasible\_flag} \Comment{QP subproblem infeasible}
			\ElsIf{\texttt{infeasible}\_0}
			\State $[\ubar{z},\bar{z}] \gets [\ubar{z}^1,\bar{z}^1]$. \Comment{Fix variable $u_{i,j}$ to upper bound and update all values}
			\ElsIf{\texttt{infeasible}\_1}
			\State $[\ubar{z},\bar{z}] \gets [\ubar{z}^0,\bar{z}^0]$. \Comment{Fix variable $u_{i,j}$ to lower bound and update all values}
			\Else
			\State $[\ubar{z},\bar{z}] \gets [\min\{\ubar{z}^0,\ubar{z}^1\},\max\{\bar{z}^0,\bar{z}^1\}]$. \Comment{Update all bound values}
			\EndIf
			\EndFor
			\EndFor
			\Ensure Updated bound values $[\ubar{z}_{i},\bar{z}_{i}], \; i \in \{0,\ldots,N\}$.
		\end{algorithmic}
\end{algorithm}

\subsection{Exact Block-sparse Presolve Procedure}

Finally, we summarize our tailored block-sparse presolve procedure for optimal control structured MIQPs in Alg.~\ref{alg:Sparse_presolve}, which should be called in each node of the B\&B method (see Figure~\ref{fig:BB_ilu}) before solving the convex relaxation. Each iteration of Alg.~\ref{alg:Sparse_presolve} includes the following block-sparse presolve operations:
	\begin{enumerate}
		\item Update variable bound values $[\ubar{z}_{i},\bar{z}_{i}]$, using forward-backward domain propagation in Alg.~\ref{alg:Sparse_DP_sweep}~(Line~3).
		\item Update bound values $[\ubar{z}_{i},\bar{z}_{i}]$ and $[\blo_{i},\bup_{i}]$, using redundant inequality constraint detection in Alg.~\ref{alg:Process_Inequalities}~(Line~4).
		\item Update bound values $[\blo_{i},\bup_{i}]$ and matrices $D_i$, using constraint coefficient strengthening in Alg.~\ref{alg:Coefficient_Strengthening}~(Line~5).
		\item If probing is enabled and allowed~(see Line~6 of Alg.~\ref{alg:Sparse_presolve}), use binary variable probing in Alg.~\ref{alg:Probing}~(Line~7).
	\end{enumerate}
The block-sparse presolve procedure in Alg.~\ref{alg:Sparse_presolve} is an iterative procedure that typically requires multiple iterations, because each operation may result in a tightening of a continuous or discrete variable bound or constraint that in turn may result in further tightenings in the subsequent iterations. The iterative procedure terminates immediately if any of the presolve operations detects an infeasibility. Alternatively, our termination condition on Line~2 of Alg.~\ref{alg:Sparse_presolve} is based on whether a particular measure of progress is sufficient or not. For example, progress can be measured by the number of variables that are fixed and/or the amount by which continuous or discrete variable bounds are tightened from one iteration to a next. The presolve procedure continues as long as one or multiple variable bounds are tightened sufficiently from one iteration to the next, i.e., if \texttt{sufficient\_progress} is true on Line~2 of Alg.~\ref{alg:Sparse_presolve}. However, after a minimum number of iterations, the algorithm may terminate if no new variable is fixed in the latest iteration, to avoid performing an excessive number of iterations for incremental tightening of the continuous variable bounds.
Due to the relatively high computational cost of binary variable probing in Alg.~\ref{alg:Probing}, probing is performed only when the amount of progress that is made by other presolve operations is not sufficient and a maximum number of iterations has not been reached, see Line~6 of Alg.~\ref{alg:Sparse_presolve}.
The overall goal of the presolve procedure is that the total time spent for removing variables and constraints and strengthening of bounds and coefficients, is considerably smaller than the reduction in B\&B solution time that is achieved by applying the presolve in each of the nodes.

\begin{algorithm}[h]
	\caption{Block-sparse presolve procedure for optimal control structured MIQP}
	\label{alg:Sparse_presolve}
	\begin{algorithmic}[1]
		\Require Optimal control structured MIQP subproblem of the form in~\eqref{eq:OCP-MIQP}, and \texttt{probing} $\in$ \{\texttt{true},\,\texttt{false}\}.
		\State $n \gets 0$.
		\While{\texttt{sufficient\_progress} $\; \land \;$ !\texttt{infeasible} $\; \land \;$ $n < n_{\mathrm{max}}$ }
		\State Update bound values $[\ubar{z}_{i},\bar{z}_{i}], \; i \in \{0,\ldots,N\}$ using forward-backward sweep in Alg.~\ref{alg:Sparse_DP_sweep}.
		\State Update bound values $[\ubar{z}_{i},\bar{z}_{i}]$ and constraint values $[\blo_{i},\bup_{i}], \; i \in \{0,\ldots,N\}$ using Alg.~\ref{alg:Process_Inequalities}.
		\State Update constraint bound values $[\blo_{i},\bup_{i}]$ and matrices $D_i, \; i \in \{0,\ldots,N\}$ using Alg.~\ref{alg:Coefficient_Strengthening}.
		\If{!\texttt{infeasible} $\; \land \;$ \texttt{probing} $\; \land \;$ !\texttt{sufficient\_progress} $\; \land \;$ $l < l_{\mathrm{max}}$}
		\State Update bound values $[\ubar{z}_{i},\bar{z}_{i}], \; i \in \{0,\ldots,N\}$ using binary variable probing in Alg.~\ref{alg:Probing}.
		\State \texttt{probing} $\gets$ \texttt{false}. 
		\EndIf
		\State $n \gets n + 1$.
		\EndWhile
		\Ensure \change{Updated MIQP data with tightened constraints} or subproblem is detected to be infeasible.
	\end{algorithmic}
\end{algorithm}


\begin{Remark}
	Other presolving techniques such as cut generation are used in general-purpose MIQP solvers, e.g., see~\cite{Achterberg2013}. However, in the present paper, we refrain from using cut generation techniques for two reasons. First, we assume that global cuts can be generated offline and included in the \change{MIQP} formulation~\eqref{eq:OCP-MIQP}. Second, standard cutting plane techniques will usually produce inequalities that couple variables across stages, which is undesirable for our block-sparsity exploiting implementation.
\end{Remark}

\section{Heuristic Presolve Method for Mixed-Integer Optimal Control} \label{sec:HEURISTICS}


Exact presolve techniques tighten constraints or prune a subset of decision variables by fixing them to the optimal values, e.g., using domain propagation, coefficient strengthening, and probing as discussed in Section~\ref{sec:PRESOLVE}.
For MIQPs, the development of effective presolve methods in commercial solvers has proven crucial in accelerating computation times~\cite{AchterbergBixbyEtAl2019}.
In this section, we present a \emph{heuristic presolve} method, which was originally proposed in~\cite{Cauligi2022} to increase rate of feasibility for supervised learning, but the approach is \change{generalized} here as a heuristic for \change{quickly} computing a feasible but suboptimal MIQP solution. The heuristic can be used to compute an \change{initial solution guess, i.e., an initial upper bound value that may result in additional pruning of nodes and therefore speedup the B\&B algorithm (e.g., see Figure~\ref{fig:BB_ilu}) or to implement a fast but suboptimal MI-MPC method}.

\subsection{Abstract Definition of a Single Presolve Step}
We refer to the parametric MIQP in~\eqref{eq:OCP-MIQP} as $\P(\theta)$, in which the problem parameters \change{$\theta$} can include the current state estimate $\hat{x}_0$, and we use $\bin \in \Z^{N_\bin}$ to denote the discrete optimization variables in~\eqref{OCP:int}. In addition, we use the compact notation $\P(\theta,\bin_{\fixed} = \hat{\bin})$ in the following abstract definition of a presolve step, to denote the MIQP~\eqref{eq:MIQP} after fixing $\bin_i = \hat{\bin}_i, i\in\fixed$ for an index set $\fixed$.

\begin{Definition}[Single Presolve Step]
	Given the problem $\P(\theta)$ and a set of integer values $\{\hat{\bin}_i\}_{i \in \fixed}$ for the index set $\fixed \subseteq \{ 1, \dots, N_\bin \}$, the presolve step computes
	\begin{equation}
	\{\flag, \hat{\bin}^+,\fixed^+\} \gets \text{Presolve}(\P(\theta),\hat{\bin},\fixed),
	\end{equation}
	resulting in updated integer values $\{\hat{\bin}_i^+\}_{i \in \fixed^+}$ for the index set $\fixed^+ \subseteq \{ 1, \dots, N_\bin \}$, for which following conditions are satisfied:
	\begin{enumerate}
		\itemsep0pt
		\item The new set of indices includes at least the original set, i.e., $\fixed \subseteq \fixed^+$.
		\item Problem $\P(\theta,\bin_{\fixed^+} = \hat{\bin}^+)$ is infeasible or unbounded, i.e., $\flag = \False$, only if $\P(\theta,\bin_{\fixed} = \hat{\bin})$ is infeasible or unbounded.
		\item Any feasible or optimal solution of $\P(\theta,\bin_{\fixed^+} = \hat{\bin}^+)$ can be mapped to a feasible or optimal solution of $\P(\theta,\bin_{\fixed} = \hat{\bin})$, and their objective values are identical.
	\end{enumerate}
	\label{def:presolve}
\end{Definition}
A presolve routine applied to a root node in B\&B corresponds to Definition~\ref{def:presolve} with $\fixed=\emptyset$.
In general, presolve cannot prune all of the binary or integer decision variables, but it can often lead to a reduced problem that is significantly faster to solve. For example, the presolve step can be defined by the block-sparse implementation in Alg.~\ref{alg:Sparse_presolve}.

\subsection{Iterative Procedure for Heuristic Presolve Method}

One heuristic approach to compute a feasible but possibly suboptimal MIQP solution is to construct a solution guess $\hat{\bin}$ for all discrete variables $\bin$ in~\eqref{eq:OCP-MIQP}, followed by solving the convex QP that results from fixing each variable $\bin_i = \hat{\bin}_i$ for $i = 1, \dots, N_\bin$. The solution guess $\hat{\bin}$ can be constructed in multiple ways, for example, by shifting the optimal MIQP solution from the previous to the next time step or by rounding a solution of the convex QP relaxation at the root node to the nearest integer solution. Alternatively, supervised learning can be used to train a recurrent neural network architecture~\cite{Cauligi2022} to make accurate predictions of the optimal values for the discrete variables in the MIQP problem $\P(\theta)$. Given such a discrete solution guess $\hat{\bin}$, we present a heuristic presolve method that can be used to compute an updated solution guess $\hat{\bin}_{\PS} $ with increased likelihood of finding a feasible and close to optimal MIQP solution by solving the resulting convex QP.

As illustrated in Figure~\ref{fig:presolve}, we propose to use a heuristic presolve method that aims to correct values in the discrete solution guess $\hat{\bin}$, given that the presolve operation can provide additional fixings that preserve feasibility and optimality; see Definition~\ref{def:presolve}. Our key insight is that if presolve prunes a subset of the integer variables that can be fixed to the optimal values, then the discrete solution guess $\hat{\bin}$ only needs to be used to fix any remaining integer decision variables, reducing the risk of causing infeasibility. The proposed method can be used to improve any existing heuristic, e.g., when presolve prunes an integer variable $\bin_i$ for which the heuristic made an incorrect prediction that would have resulted in an infeasible solution. 
Algorithm~\ref{alg:ITERATIVE_PRESOLVE} describes the proposed heuristic presolve method given a candidate integer solution $\hat{\bin}$ for a particular vector of problem parameters $\theta$.
Let $\fixed$ be the index set of pruned decision variables (i.e., $\bin_i$ for $i \in \fixed$ are fixed), $| \fixed |$ is the number of pruned integer decision variables, and $\fixed$, $\hat{\bin}_{\PS}$ are initialized as empty sets (Line~\ref{line:init_presolve_set}).
The presolve step is then called (Line~\ref{line:presolve_routine}) and returns whether infeasibility was detected, an updated set of indices $\fixed$, and corresponding values $\hat{\bin}_{\PS}$.
Algorithm~\ref{alg:ITERATIVE_PRESOLVE} terminates if all of the integer variables have been fixed (Line~\ref{line:check_cardinality}) or if infeasibility is detected (Line~\ref{line:presolve_failure}).
Otherwise, one of the remaining free integer variables $\bin_j$ for $j \in \{ 1, \dots, N_\bin \} \setminus \fixed$ is selected and fixed to the value in the original solution guess, i.e., $\hat{\bin}_{\PS,j} = \hat{\bin}_j$ (Line~\ref{line:var_select}-\ref{line:update_set}).

\begin{figure}[!ht]
	\centering
	\includegraphics[width=0.6\columnwidth,clip]{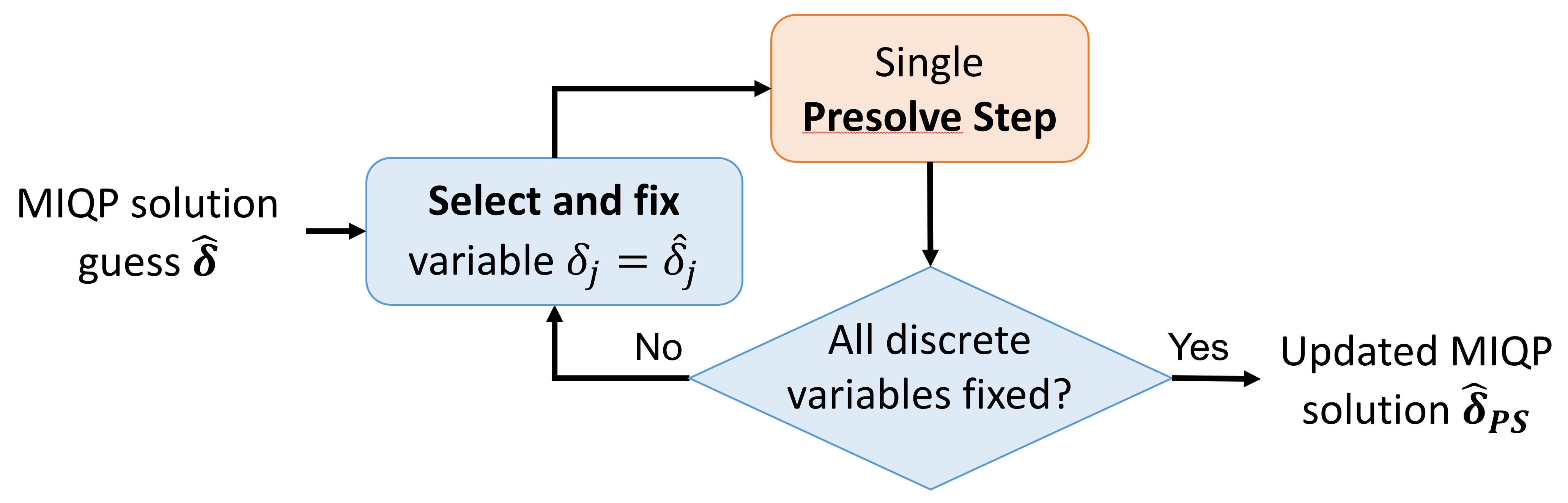}
	\caption{Heuristic presolve method to fix discrete optimization variables in \change{MIQP}.}
	\label{fig:presolve}
\end{figure}

In Algorithm~\ref{alg:ITERATIVE_PRESOLVE}, we denote $\mathtt{VarSelect}$ as the procedure to choose the integer variable $\bin_j$ to fix next, which is closely related to the variable selection policy in B\&B routines. We will further illustrate the performance of the heuristic presolve method in Alg.~\ref{alg:ITERATIVE_PRESOLVE} in combination with rounding of the relaxed QP solution at the root node to the nearest integer solution. In this case, the variable selection strategy $\mathtt{VarSelect}$ can select a remaining free integer variable, for which the relaxed QP solution is closest to integer feasible, i.e., the variable for which the difference with the value after rounding is smallest.
Algorithm~\ref{alg:ITERATIVE_PRESOLVE} is an iterative procedure, which calls our tailored presolve routine in Alg.~\ref{alg:Sparse_presolve} at each iteration~(Line~\ref{line:presolve_routine}). At an increased computational cost, one can use the binary variable probing strategy in Alg.~\ref{alg:Probing} specifically for the selected variable $\delta_j$, see Line~\ref{line:probing} of Alg.~\ref{alg:ITERATIVE_PRESOLVE}, in order to check whether the variable can be fixed based on probing before fixing it to the value in the solution guess. Alternatively, one could skip the presolve call when a particular limit on the computation time has been reached, i.e., such that all remaining free integer variables are fixed directly to \change{their corresponding values} in the solution guess (Line~\ref{line:update_set}).

\begin{algorithm}[h]
	\caption{Iterative Procedure for Heuristic Presolve Method}
	\label{alg:ITERATIVE_PRESOLVE}
	\begin{algorithmic}[1]
		\Require {Candidate integer solution} $\hat{\bin}${, problem parameters }$\theta$.
		\State Initialize set of pruned integer variables $\fixed \leftarrow \emptyset$ and values $\hat{\bin}_{\PS} \gets \emptyset$. \label{line:init_presolve_set}
		\For{$i \in \{1, \dots, N_\bin\}$}
		\State $\{\flag,\hat{\bin}_{\PS},\fixed\} \gets \text{Presolve}(\P(\theta),\hat{\bin}_{\PS},\fixed)$ in Alg.~\ref{alg:Sparse_presolve}. \label{line:presolve_routine}
		\IfThen{$\flag == \False$}{\Return $(\False,\hat{\bin}_{\PS})$.} \label{line:presolve_failure}
		\IfThen{$|\fixed| == N_\bin$}{\Return $(\True,\hat{\bin}_{\PS})$.} \label{line:check_cardinality}
		\State $j \gets $ $\mathtt{VarSelect}(\{ 1, \dots, N_\bin \} \setminus \fixed)$. \label{line:var_select}
		\State Optional: perform binary variable probing in Alg.~\ref{alg:Probing} specifically for $\delta_j$. \label{line:probing}
		\IfThen{$\delta_j$ not fixed by probing}{fix integer variable $\hat{\bin}_{\PS,j} = \hat{\bin}_{j}$, $\fixed \gets \fixed \cup \{ j \}$. \label{line:update_set}}
		\EndFor
		\Ensure{Updated integer solution guess $\hat{\bin}_{\PS}$.}
	\end{algorithmic}
\end{algorithm}


%
%
%

%
%
%
%

\section{Practical Considerations for Embedded Mixed-Integer MPC}
\label{sec:MIMPC}

In embedded real-time applications of mixed-integer MPC~(MI-MPC), one needs to solve an MIQP~\eqref{eq:OCP-MIQP} at each sampling instant under strict timing constraints. We can leverage the fact that we solve a sequence of similar problems, parametrized by the initial state value $\hat{x}_{0}$, in order to warm-start the B\&B algorithm. We refer to our warm-starting procedure as \emph{tree propagation}, which was originally proposed in~\cite{Hespanhol2019}, because the main goal is to ``propagate'' the B\&B tree forward by one time step. 


\subsection{Branch-and-bound Tree Propagation for Warm Starting}



The warm-starting procedure aims to use knowledge of one MIQP solution, i.e., the search tree after solving the problem, in order to improve the B\&B search for the next MIQP~\cite{Bemporad2018,Marcucci2021}. Our approach is to store the path from the root to the leaf node where the optimal solution to the MIQP was found, as well as the order in which the variables are branched. This is based on the knowledge that the branching order is crucial for the efficiency of any B\&B method~\cite{achterberg2005branching}. We then perform a shifting and update of this path in order to obtain a \emph{warm-started tree} to start our B\&B search at the next time step. We illustrate this procedure in Figure~\ref{fig:MPCTree}, where the optimal path at the current time step is denoted by the sequence of nodes $P_0 \rightarrow P_1 \rightarrow P_3 \rightarrow P_5 \rightarrow P_6$. Let us consider a corresponding sequence of variables $u_2 \rightarrow u_3 \rightarrow u_0 \rightarrow u_1$ that we branched on in order to find the optimal solution at the leaf node $P_6$. After shifting by one time step, all branched variables in the first control interval can be ignored. Assuming the index refers to the time step in the control horizon, this results in a shifted and shorter path of variables $u_1 \rightarrow u_2 \rightarrow u_0$.

After obtaining a new state estimate $\hat{x}_0$, we execute the block-sparse presolve routine in Alg.~\ref{alg:Sparse_presolve} and we solve the convex QP relaxation corresponding to the root node. Given the warm-started tree and the relaxed solution at the root node, we remove nodes that correspond to branched variables that are already integer feasible in the solution at the root node. The shifted MIQP solution from the previous time step can be used to fix all integer variables at the current time step and solve the resulting convex QP, which may result in a feasible solution and therefore an upper bound for the B\&B method. Alternatively or in addition, the heuristic presolve method in Alg.~\ref{alg:ITERATIVE_PRESOLVE} can be used to compute a feasible solution and possibly a tighter B\&B upper bound. We proceed by solving all the leaf nodes on the warm-started path. As we solve both children of a node on this path, we do not have to solve the parent node itself and therefore we reduce computations by solving fewer QP relaxations.
We process the warm-started tree in the order depicted by the index of each node in Fig.~\ref{fig:MPCTree}, after which we resume normal procedure of the B\&B method.

\begin{figure} 
\centering
\includegraphics[width=0.75\textwidth]{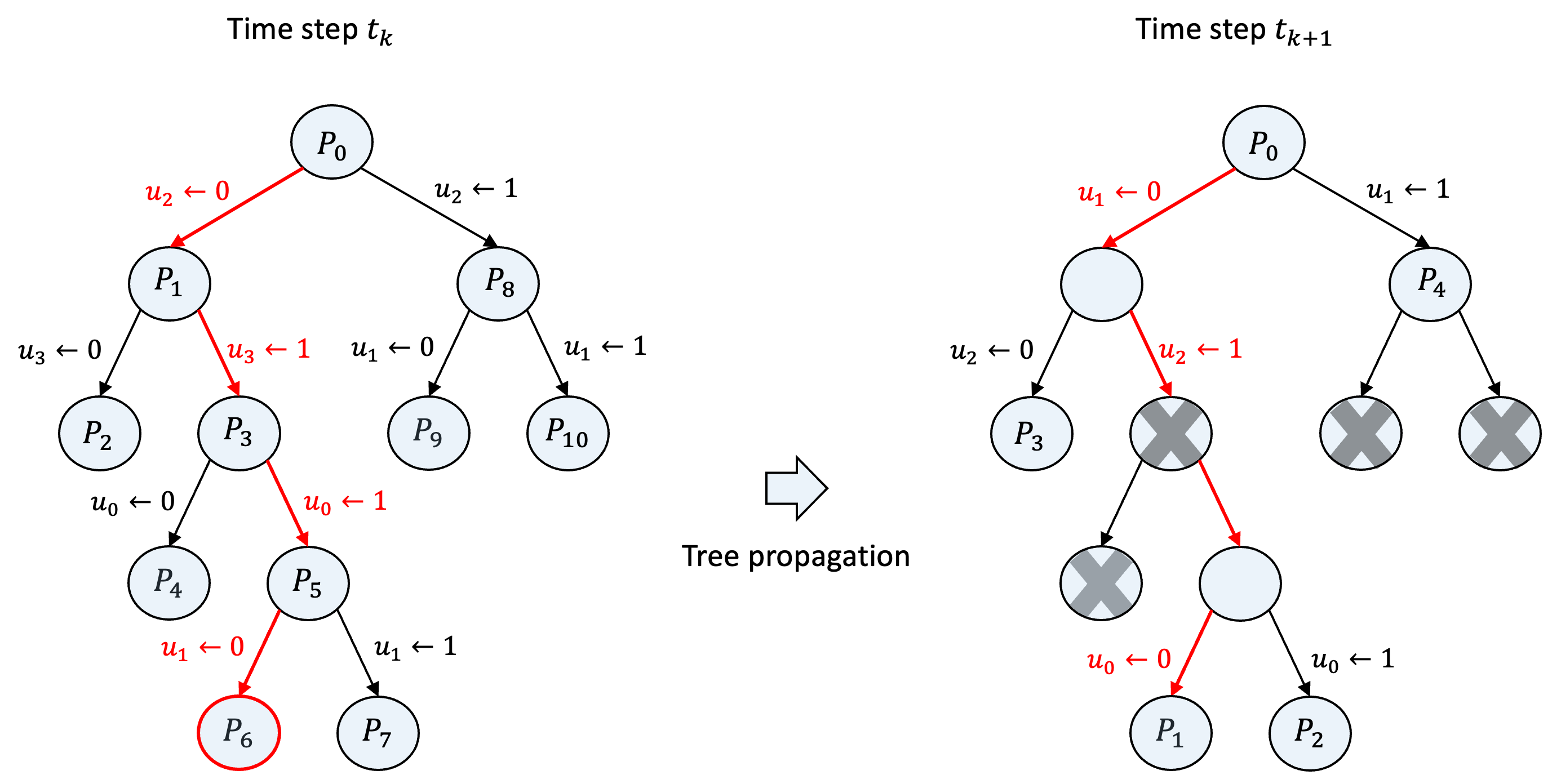}
\caption{Illustration of the tree propagation technique \change{for warm starting of the B\&B method} from one time step to the next in the MI-MPC algorithm: index~$i$ denotes the order in which each node $P_i$ is solved.}
\label{fig:MPCTree}
\end{figure}


We can additionally shift and re-use the pseudo-cost information from one MPC time step to the next, in order to make better branching decisions without the need for computationally expensive strong branching.
The propagation of pseudo-costs can be coupled with an update of the reliability parameters in order to discount relatively old pseudo-cost information, which may help to avoid increasing the worst-case computation time by making a bad branching decision. The reliability number may be reduced for each variable from one time step to the next, in order to force strong branching for variables that have not been branched on in a sufficiently long time. In addition, nodes can be removed from the warm-started path in case they correspond to branched variables for which there is no pseudo-cost information or it is not sufficiently reliable, in an attempt to avoid bad branching decisions in the B\&B method. Finally, these warm-started pseudo-costs can also be used to re-order the warm-started tree to improve the branching order and hopefully result in smaller B\&B search tree sizes.

\subsection{Embedded Software Implementation for MI-MPC}

We refer to the proposed MI-MPC algorithm as \software{BB-ASIPM}, since it combines a B\&B method with tailored block-sparse presolve techniques and \software{ASIPM} to solve the convex QP relaxations. At each control time step, it solves an  MIQP where the B\&B tree and the pseudo-cost information are warm-started. The B\&B strategy and the tailored block-sparse presolve methods, the warm-start and heuristic branching techniques, as well as the \software{ASIPM} solver have all been implemented in self-contained C~code, which allows for real-time implementations on embedded control hardware.
In addition to the presented techniques to speed up a B\&B method for MI-MPC, multiple heuristic techniques can be used to achieve real-time feasibility. For example, an upper bound on the number of B\&B iterations can be imposed to ensure a maximum computation time and to allow the MI-MPC controller to use a feasible but suboptimal solution instead. If an integer-feasible solution has been found, e.g., based on warm-starting in Fig.~\ref{fig:MPCTree} or the heuristic presolve method in Alg.~\ref{alg:ITERATIVE_PRESOLVE}, a B\&B method can provide a bound on its suboptimality. Alternatively or in addition, an integer horizon can be introduced, i.e., integer feasibility could be enforced on the first $M < N$ stages in~\eqref{OCP:int}. The latter can drastically reduce computation times at the cost of an approximation error as discussed in~\cite[Sec.~4.2]{frick2015embedded}. However, the investigation of such additional heuristics is beyond the scope of the present paper.

\section{Case Studies: Mixed-Integer MPC Simulation Results}
\label{sec:caseStudies}

Let us illustrate the computational performance of \software{BB-ASIPM} against multiple state-of-the-art MIQP solvers for two numerical case studies of mixed-integer optimal control applications. In addition, we will illustrate the performance of the heuristic presolve method to compute a feasible but possibly suboptimal solution in real time. The two benchmark problems include: (1)~mobile robot motion planning with obstacle avoidance constraints and (2)~an underactuated cart-pole with soft contacts.
All computation times in Section~\ref{sec:planning} are obtained on a MacBook Pro (16-inch, 2019), with $2.4$~GHz 8-Core Intel Core i9 processor. The numerical results in Section~\ref{sec:cartpole} are obtained using hardware-in-the-loop~(HIL) simulations on a dSPACE Scalexio rapid prototyping unit, with a DS6001 Processor Board for real-time processing~\cite{SCALEXIO2022}.

\subsection{Mixed-integer Optimal Control for Motion Planning with Obstacle Avoidance}
\label{sec:planning}

The first case study concerns the time-optimal motion planning, using a simple kinematic model for a mobile robot, taking into account collision avoidance constraints based on an MIQP formulation that is similar to the benchmark example from~\cite{Cauligi2022}.


\subsubsection{Problem Formulation}

We formulate an MIOCP of the form in~\eqref{eq:OCP-MIQP}, where the state vector is defined as $x = [\px \; \py \; \vx \; \vy \; \goal]^\top$, i.e., $\nx = 5$, including the 2D position $(\px,\py)$ and velocity vector $(\vx,\vy)$ of the robot and including a binary state $\goal \in \{0,1\}$ that indicates whether the goal has been reached. The discrete-time state dynamics in~\eqref{OCP:dyn} read as
\begin{equation}
x_{i+1} =  \begin{bmatrix} \px_{i+1} \\ \py_{i+1} \\ \vx_{i+1} \\ \vy_{i+1} \\ \goal_{i+1} \end{bmatrix} = \begin{bmatrix} 1 & 0 & \Ts & 0 & 0 \\ 0& 1 & 0 & \Ts & 0 \\ 0 & 0 & 1 & 0 & 0 \\ 0& 0 & 0 & 1 & 0 \\ 0 & 0 & 0 & 0 & 1 \end{bmatrix} \begin{bmatrix} \px_i \\ \py_i \\ \vx_i \\ \vy_i \\ \goal_i \end{bmatrix} + \begin{bmatrix} 0 & 0 & 0 \\ 0 & 0 & 0 \\ \Ts & 0 & 0 \\ 0 & \Ts & 0 \\ 0 & 0 & 1 \end{bmatrix} \begin{bmatrix} \ax_i \\ \ay_i \\ \deltag_i \end{bmatrix}, \label{eq:dyn_case1}
\end{equation}
where the control inputs are $u = [\ax \; \ay \; \deltag]^\top$, i.e., $\nU=3$, including the 2D acceleration vector $(\ax,\ay)$ and an auxiliary binary variable $\deltag_i \in \{0,1\}$ that determines whether the goal is reached at the time step $i \in \{1,\ldots,N\}$ in the prediction horizon. The inequality constraints~\eqref{OCP:ineq} include requirements on reaching the goal state as
\begin{equation}
\xgoal - \M\,(1-\goal_i) \le x_{i,1:4} \le \xgoal + \M\,(1-\goal_i), \label{eq:goal}
\end{equation}
where $\M > 0$ is sufficiently large and $x_{i,1:4} = [\px_i \; \py_i \; \vx_i \; \vy_i]^\top$, which ensures the implication $\goal_i = 1 \implies x_{i,1:4} = \xgoal$ and we additionally enforce that $\sum_{i=0}^{N-1} \deltag_i = 1$, and therefore $\goal_N=1$ holds. As illustrated in Figure~\ref{fig:motion_planning_trajs}, we include collision avoidance constraints for rectangular obstacle shapes that are aligned with the axes for simplicity, 
\begin{equation}
\begin{aligned}
\pxmin_j - \M \delta^{\mathrm{o},1}_j \le \px_i &\le \pxmin_j + \M (1 - \delta^{\mathrm{o},1}_j), \\
\pxmax_j - \M (1- \delta^{\mathrm{o},2}_j) \le \px_i &\le \pxmax_j + \M \delta^{\mathrm{o},2}_j, \\
\pymax_j - \M (1-\delta^{\mathrm{o},4}_j) \le \py_i &\le \pymin_j + \M (1-\delta^{\mathrm{o},3}_j), \\
\pxmin_j - \M (1-\delta^{\mathrm{o},3}_j-\delta^{\mathrm{o},4}_j) \le \px_i &\le \pxmax_j + \M (1-\delta^{\mathrm{o},3}_j-\delta^{\mathrm{o},4}_j), \\
\delta^{\mathrm{o},1}_j + \delta^{\mathrm{o},2}_j + \delta^{\mathrm{o},3}_j + \delta^{\mathrm{o},4}_j &= 1,
\end{aligned} \label{eq:obstacle_constraints}
\end{equation}
where the bounds $[\pxmin_j \; \pxmax_j]$ and $[\pymin_j \; \pymax_j]$ define the rectangular avoidance zone for each obstacle $j = 1, \ldots, n_{\mathrm{obs}}$. As can be observed in Figure~\ref{fig:motion_planning_trajs}, the obstacle avoidance constraints in~\eqref{eq:obstacle_constraints} are enforced pointwise in time, and each exclusion zone includes appropriate margins to account for the physical dimension of the mobile robot and to account for discretization errors. The cost function~\ref{OCP:obj} for this numerical case study reads as
\begin{equation}
\sum_{i=0}^{N-1} \left( i\, \deltag_i + \Vert u_{i,1:2} \Vert_R^2 \right) + \sum_{i=0}^{N} \Vert x_{i,1:4} - \xgoal \Vert_Q^2,
 \label{eq:cost_case1}
\end{equation}
where the first term accounts for minimizing the time step at which the goal is reached and minimizing the acceleration inputs $u_{i,1:2} = [\ax_i \; \ay_i]^\top$, and the second term accounts for minimizing the state error with respect to the goal state.

\begin{figure}[h] 
	\centerline{\hbox{
			\includegraphics[width=0.75\textwidth]{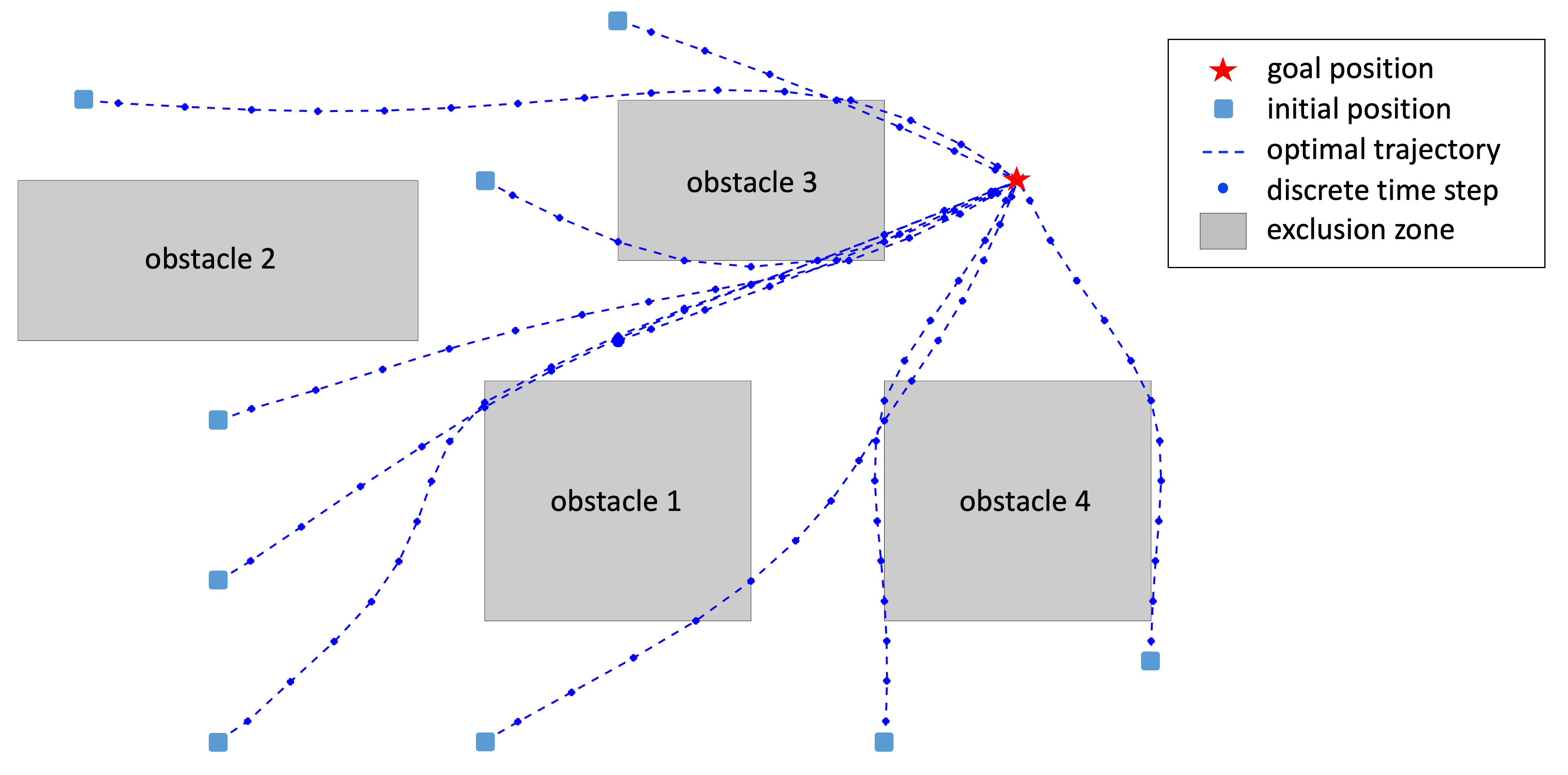}}}
	\caption{Illustration of the MIP-based motion planning case study: the optimal trajectories for a horizon length $N = 20$ are depicted in blue, given four rectangular obstacle shapes, multiple initial positions and a fixed goal position. \change{The obstacle avoidance constraints are enforced pointwise in time, i.e., for each discrete time step of the trajectories, and the obstacle dimensions include appropriate margins to account for discretization errors and to account for the physical shape of the mobile robot.}}
	\label{fig:motion_planning_trajs}
\end{figure}

\subsubsection{MIQP Solver: Numerical Results}

We illustrate the computational performance of the proposed MIQP solver by solving multiple MIOCPs for motion planning with obstacle avoidance for a varying number of obstacles $n_{\mathrm{obs}}$, a varying horizon length $N$ and varying initial state values $\hat{x}_0$. In particular, the number of obstacles is selected to be in the range $n_{\mathrm{obs}} \in [1,4]$ and the horizon length $N \in [6,20]$. The MIOCP for each horizon length and for each number of obstacles is solved for $200$ different initial state values, sampled from a uniform distribution where $0 \le \px_0 \le 10$ and $0 \le \py_0 \le 10$, excluding the avoidance regions. In addition to our proposed \software{BB-ASIPM} solver, the MIOCPs are solved using the state-of-the-art solvers \software{GUROBI}~\cite{gurobi}, \software{MOSEK}~\cite{mosek}, \software{GLPK}~\cite{GLPK2022}, \software{Cbc}~\cite{CBC2022}, and Matlab's \software{intlinprog}. Because the solvers \software{GLPK}, \software{Cbc}, and \software{intlinprog} do not provide direct support for the solution of MIQPs, we present numerical results for solving a mixed-integer linear programming~(MILP) approximation with these particular software tools instead.
Figure~\ref{fig:comparison_mp} shows the average computation time of the resulting MIOCP solutions for $200$ randomly sampled initial state values, for each of the numbers of obstacles $n_{\mathrm{obs}} \in [1,4]$ and for each horizon length $N \in [6,20]$. As expected, the average computation time for motion planning grows rapidly with the number of obstacles $n_{\mathrm{obs}}$ and with the horizon length $N$, due to the increasing number of binary optimization variables $n_{\mathrm{bin}} = N\, \left( 1 + 4\, n_{\mathrm{obs}} \right)$ in the considered MIOCP formulation.

\begin{figure}[h] 
	\centerline{\hbox{
			\includegraphics[width=1.2\textwidth]{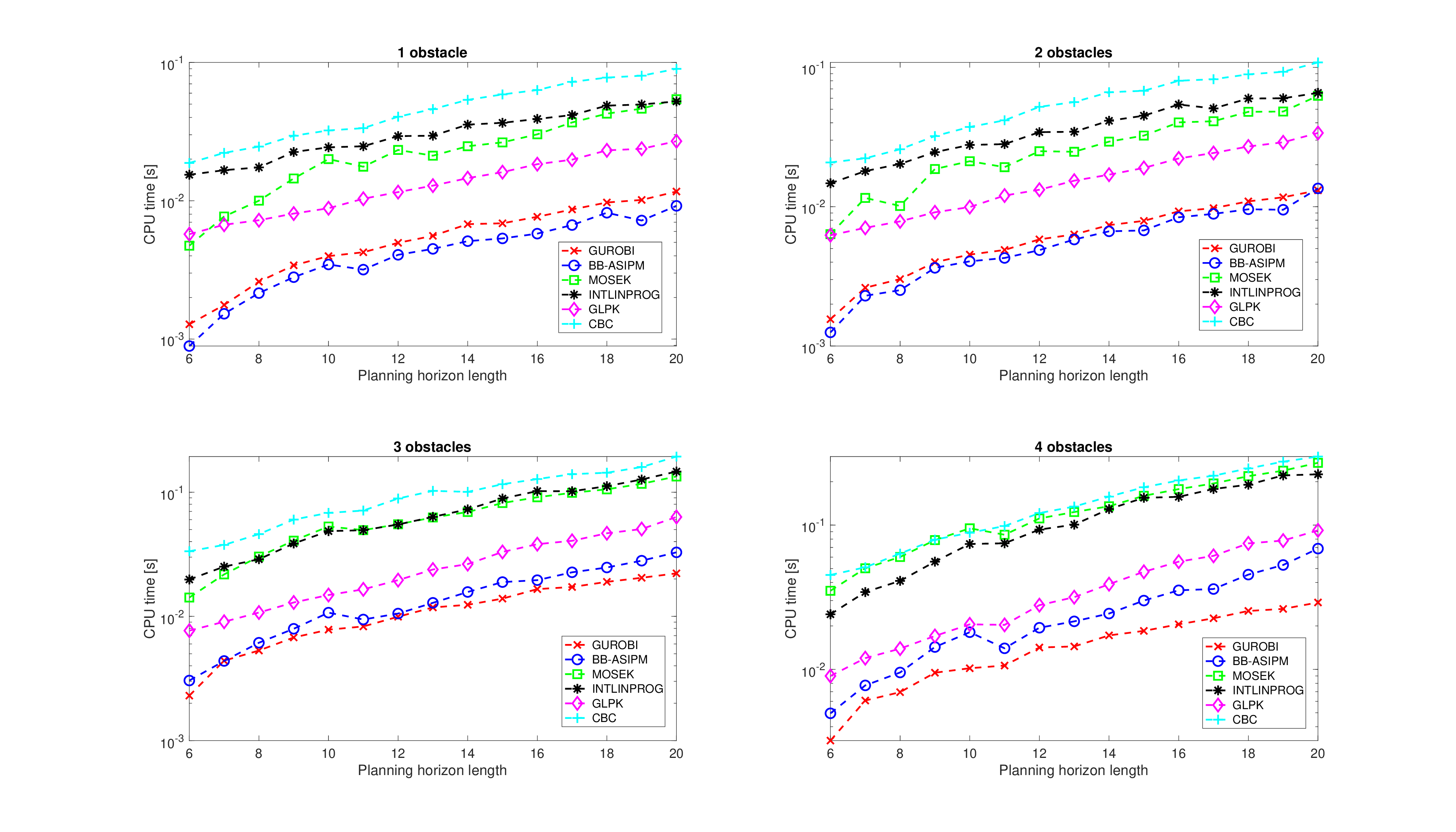}}}
	\caption[]{Average computation times for MIP-based motion planning case study for $1$, $2$, $3$ and $4$ obstacles, using a varying horizon length $N \in [6,20]$, and using solvers \software{GUROBI}, \software{MOSEK}, \software{intlinprog}, \software{GLPK}, \software{Cbc} and the proposed \software{BB-ASIPM} solver. Note that each of the solvers computes the globally optimal solution of the MIP.}
\label{fig:comparison_mp}
\end{figure}

For this particular case study, the numerical results in Figure~\ref{fig:comparison_mp} show that our proposed \software{BB-ASIPM} is the fastest solver for all horizon lengths $N \in [6,20]$, when $n_{\mathrm{obs}}=1$ and $n_{\mathrm{obs}}=2$. The state-of-the-art commercial solver \software{GUROBI} becomes faster for $n_{\mathrm{obs}} \ge 3$ obstacles and it will likely continue to outperform the \software{BB-ASIPM} solver for larger problem dimensions, because \software{GUROBI} 
includes many advanced cutting plane techniques and heuristics~\cite{gurobi} that are not implemented in the \software{BB-ASIPM} solver. However, \software{BB-ASIPM} remains relatively competitive and it outperforms the other state-of-the-art solvers even for larger MIOCPs with up to $n_{\mathrm{bin}} = 340$ binary optimization variables for $N=20$ and $n_{\mathrm{obs}}=4$, in this particular case study. These results confirm that our proposed \software{BB-ASIPM} solver can be competitive with state-of-the-art MIP solvers, even though the software implementation of \software{BB-ASIPM} is relatively compact and self-contained such that it can be executed on an embedded microprocessor for real-time applications of mixed-integer optimal control, e.g., for real-time motion planning with obstacle avoidance constraints. Instead, state-of-the-art optimization tools, such as \software{GUROBI} typically cannot be used on embedded control hardware with limited computational resources and with limited memory~\cite{DiCairano2018tutorial}. Some results of using \software{BB-ASIPM} on the dSPACE Scalexio rapid prototyping unit will be illustrated further in Section~\ref{sec:cartpole}.

\begin{Remark}
	Advanced presolve and heuristic options have been activated for each of the state-of-the-art software tools, resulting in fair computational comparisons. However, because the C~code implementation of the \software{BB-ASIPM} solver is a single-threaded process, the computation times in this paper are based on single-threaded processing for all MIP solvers. Similar to state-of-the-art B\&B methods, parallel processing could be used to speed up computations in the \software{BB-ASIPM} solver, by solving QP relaxations corresponding to multiple nodes in the B\&B tree simultaneously. This remains outside the scope of the present paper.
\end{Remark}

\subsubsection{Heuristic Presolve Method: Numerical Results}

Table~\ref{tab:iterative_presolve} shows the average computation time of using \software{GUROBI}, \software{MOSEK}, or \software{BB-ASIPM} to solve the MIOCPs for $200$ randomly sampled initial state values, for each of the numbers of obstacles $n_{\mathrm{obs}} \in \{1,3,5,7\}$ and for different horizon lengths $N \in \{6,12,18\}$. In particular, Table~\ref{tab:iterative_presolve} also reports the average computation time for the \software{Heuristic-presolve} technique in Algorithm~\ref{alg:ITERATIVE_PRESOLVE}.
The candidate binary solution $\hat{\bin}$, which is an important input that strongly affects the performance of Algorithm~\ref{alg:ITERATIVE_PRESOLVE}, is computed by rounding the relaxed QP solution at the root node to the nearest integer solution for the numerical results in Table~\ref{tab:iterative_presolve}. Since the \software{Heuristic-presolve} method cannot guarantee to find the globally optimal solution, and it may even fail to find a feasible solution of the MIQP, Table~\ref{tab:iterative_presolve} reports the rate of infeasibility and suboptimality as
\begin{equation}
\text{Infeasibility} = 100\, \frac{n_{\mathrm{fail}}}{n_{\mathrm{sample}}} [\%], \qquad \text{Suboptimality} = 100\, \frac{J - J^\star}{J^\star} [\%], \label{eq:infeas}
\end{equation}
where $n_{\mathrm{sample}}=200$, $J^\star$ is the globally optimal objective value for a particular MIQP and $J$ is the objective value of a feasible but potentially suboptimal solution to the same MIQP, i.e., $J \ge J^\star$. Similar to Figure~\ref{fig:comparison_mp}, the numerical results in Table~\ref{tab:iterative_presolve} show that \software{BB-ASIPM} can outperform \software{GUROBI} for relatively small problems, but the \software{GUROBI} solver is faster for larger problem dimensions. In addition, Table~\ref{tab:iterative_presolve} illustrates how the proposed \software{Heuristic-presolve} technique can be used to compute a suboptimal solution faster than the time needed to solve the same MIQP with either \software{GUROBI}, \software{MOSEK} or \software{BB-ASIPM}. The rate of infeasibility and suboptimality of \software{Heuristic-presolve} is small for most cases of this particular problem, but it can be observed that these rates generally increase with the MIQP problem dimensions. It has been shown recently in~\cite{Cauligi2022} that the performance of \software{Heuristic-presolve} can be improved, i.e., the rate of infeasibility and suboptimality can be decreased, by using supervised learning techniques. Specifically, the work in~\cite{Cauligi2022} describes a framework based on deep recurrent neural networks in combination with the \software{Heuristic-presolve} in Algorithm~\ref{alg:ITERATIVE_PRESOLVE}. The details of this are outside the scope of the present paper.



\begin{table}[h]
	\caption{Average computation times for MIP-based motion planning case study for $1$, $3$, $5$ and $7$ obstacles, with a varying horizon length $N \in \{6,12,18\}$, and using solvers \software{GUROBI}, \software{MOSEK}, \software{BB-ASIPM} and the proposed \software{Heuristic-presolve} technique in Algorithm~\ref{alg:ITERATIVE_PRESOLVE}. We additionally report the rate of infeasibility and suboptimality in~\eqref{eq:infeas} for \software{Heuristic-presolve}.}
	\label{tab:iterative_presolve}
	\centering
	\setlength{\tabcolsep}{0.7em}
	\begin{tabular}{ l | c | c | c | c c c }
		\toprule
		 & \multicolumn{1}{c}{\software{GUROBI}} & \multicolumn{1}{c}{\software{MOSEK}} & \multicolumn{1}{c}{\software{BB-ASIPM}} & \multicolumn{3}{c}{\software{Heuristic-presolve}} \\
		& Time~[ms] & Time~[ms] & Time~[ms] & Time~[ms] & Infeasibility~[\%] & Suboptimality~[\%] \\
		\midrule

$N = 6, \; n_{\mathrm{obs}} = 1$  &  $1.2$~ms  &  $4.1$~ms  &  $0.8$~ms  &  $0.7$~ms  &  $0.0$~\%  &  $0.0$~\%  \\ 
$N = 6, \; n_{\mathrm{obs}} = 3$  &  $2.3$~ms  &  $13.6$~ms  &  $2.9$~ms  &  $1.4$~ms  &  $0.0$~\%  &  $0.1$~\%  \\ 
$N = 6, \; n_{\mathrm{obs}} = 5$  &  $3.2$~ms  &  $34.7$~ms  &  $5.4$~ms  &  $2.4$~ms  &  $0.0$~\%  &  $1.1$~\%  \\ 
$N = 6, \; n_{\mathrm{obs}} = 7$  &  $4.3$~ms  &  $46.8$~ms  &  $8.0$~ms  &  $3.6$~ms  &  $0.0$~\%  &  $2.4$~\%  \\ 
$N = 12, \; n_{\mathrm{obs}} = 1$  &  $4.2$~ms  &  $22.2$~ms  &  $3.8$~ms  &  $2.1$~ms  &  $0.0$~\%  &  $0.2$~\%  \\ 
$N = 12, \; n_{\mathrm{obs}} = 3$  &  $10.7$~ms  &  $75.3$~ms  &  $14.4$~ms  &  $5.7$~ms  &  $0.0$~\%  &  $0.2$~\%  \\ 
$N = 12, \; n_{\mathrm{obs}} = 5$  &  $13.6$~ms  &  $126.2$~ms  &  $23.5$~ms  &  $8.5$~ms  &  $0.0$~\%  &  $2.6$~\%  \\ 
$N = 12, \; n_{\mathrm{obs}} = 7$  &  $22.2$~ms  &  $220.9$~ms  &  $43.3$~ms  &  $15.4$~ms  &  $0.0$~\%  &  $4.8$~\%  \\ 
$N = 18, \; n_{\mathrm{obs}} = 1$  &  $7.1$~ms  &  $35.8$~ms  &  $6.4$~ms  &  $3.1$~ms  &  $0.5$~\%  &  $0.0$~\%  \\ 
$N = 18, \; n_{\mathrm{obs}} = 3$  &  $18.8$~ms  &  $140.3$~ms  &  $27.9$~ms  &  $8.5$~ms  &  $1.2$~\%  &  $0.1$~\%  \\ 
$N = 18, \; n_{\mathrm{obs}} = 5$  &  $28.4$~ms  &  $264.8$~ms  &  $55.8$~ms  &  $15.8$~ms  &  $1.3$~\%  &  $4.6$~\%  \\ 
$N = 18, \; n_{\mathrm{obs}} = 7$  &  $37.5$~ms  &  $333.6$~ms  &  $91.3$~ms  &  $26.0$~ms  &  $1.9$~\%  &  $6.5$~\%  \\ 
		\bottomrule
	\end{tabular}
\end{table}

\subsection{Mixed-integer MPC for Stabilization of Inverted Pendulum with Soft Contacts}
\label{sec:cartpole}

As a second case study, we consider MI-MPC for an inverted pendulum with soft contacts as shown in Fig.~\ref{fig:cartpole}, which is representative of underactuated, multi-contact control problems~\cite{AydinogluPreciadoEtAl2020,Todorov2012}.
Although MIOCPs form an attractive framework for modeling such problems, controllers that can react and plan online for such systems are rarely real-time feasible or near-optimal~\cite{MarcucciDeitsEtAl2017}.

\begin{figure}
	\centering
	\begin{subfigure}[b]{0.49\textwidth}
		\centerline{\hbox{
		\includegraphics[trim={0 1.5cm 0 0},clip,width=0.75\textwidth]{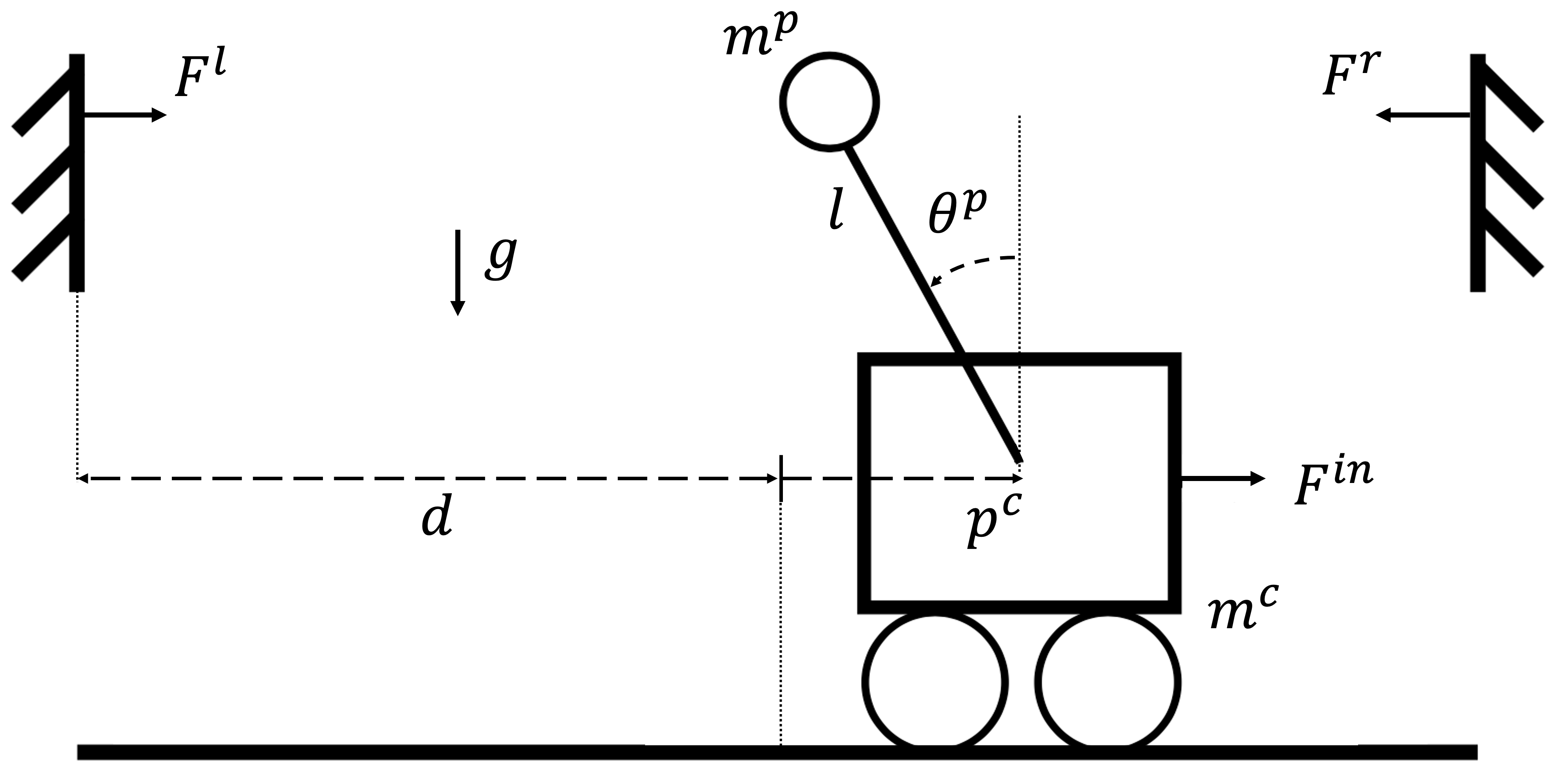}}}
\caption[]{}
\label{fig:cartpole}
\end{subfigure}
\hspace{-12mm}
\begin{subfigure}[b]{0.49\textwidth}
\centerline{\hbox{
		\includegraphics[trim={0 0 0 0},clip,width=0.5\textwidth]{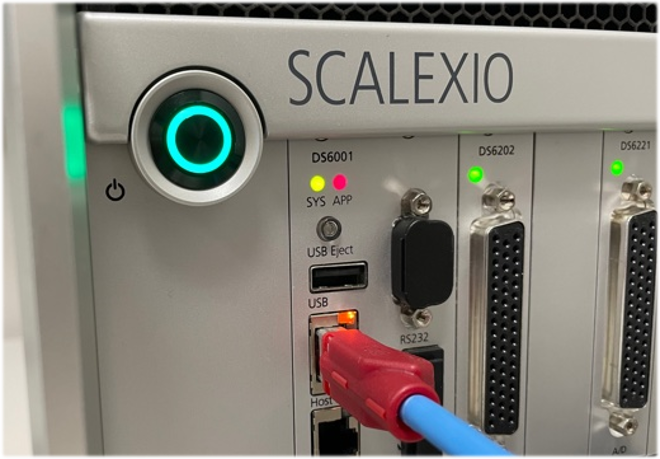}}}
\caption[]{}
\label{fig:scalexio}
\end{subfigure}
\caption{Illustration of the stabilization of an inverted pendulum on top of a cart with two soft walls that result in contact forces in~(a), and the dSPACE Scalexio rapid prototyping unit for hardware-in-the-loop simulations in~(b).}
\end{figure}

\subsubsection{Problem Formulation}

The cart-pole system with soft walls is often used as a benchmark problem for mixed-integer optimal control, e.g., see~\cite{Marcucci2021,Cauligi2022a} . This standard hybrid control problem results in an \change{MIQP} of the form in~\eqref{eq:OCP-MIQP}, which is solved at each sampling time step of the MI-MPC controller. Given the current state of the cart-pole system $\hat{x}_0$, the aim is to use the horizontal forces of the cart to effectively regulate an inverted pendulum towards the unstable equilibrium at the origin. The state vector is defined as $x = [\pc \; \thetap \; \vc \; \omegap]^\top$, i.e., $\nx = 4$, including the position $\pc$ and velocity $\vc$ of the cart, and the angle $\thetap$ and angular velocity $\omegap$ of the pendulum. The force input applied to the cart is $\Fc \in \R$ and $\Fl, \Fr \in \R$ are the contact forces transmitted by the soft walls on the left and right of the cart-pole system, see Fig.~\ref{fig:cartpole}. The discrete-time state dynamics in~\eqref{OCP:dyn} read as
\begin{equation}
x_{i+1} =  \begin{bmatrix} \pc_{i+1} \\ \thetap_{i+1} \\ \vc_{i+1} \\ \omegap_{i+1} \end{bmatrix} = \begin{bmatrix} 1 & 0 & \Ts & 0 \\ 0& 1 & 0 & \Ts \\ 0 & g \frac{\mp}{\mc} & 1 & 0 \\ 0 & g \frac{\mc+\mp}{\mc\, l} & 0 & 1 \end{bmatrix} \begin{bmatrix} \pc_i \\ \thetap_i \\ \vc_i \\ \omegap_i  \end{bmatrix} + \begin{bmatrix} 0 & 0 & 0 \\ 0 & 0 & 0 \\ \frac{1}{\mc} & 0 & 0 \\ \frac{1}{\mc\, l} & -\frac{1}{\mp\, l} & \frac{1}{\mp\, l} \end{bmatrix} \begin{bmatrix} \Fc_i \\ \Fl_i \\ \Fr_i \end{bmatrix}, \label{eq:dyn_case2}
\end{equation}
which are linearized around the nominal angle of the pendulum $\thetap=0$, and where $g$ is the gravitational acceleration, $l$ is the length of the pendulum, and $\mc$, $\mp$ are the mass of the cart and pole, respectively. We use the same parameter values as in~\cite{Marcucci2021}, i.e., $g=10$, $l=1$, and $\mc=\mp=1$.
The inequality constraints~\eqref{OCP:ineq} include the following bounds on states and inputs
\begin{equation}
-d \le \pc_i \le d, \quad -\frac{\pi}{10} \le \thetap_i \le \frac{\pi}{10}, \quad -1 \le \vc_i, \omegap_i, \Fc_i \le 1,
\label{eq:bounds_case2}
\end{equation}
where $d=0.5$ is half the distance between the two walls~(see Fig.~\ref{fig:cartpole}). A linear approximation for the position of the tip of the pole is $\pt_i = \pc_i - l\, \thetap_i$, resulting in the relative distances $\distl_i = -d - \pt_i$ and $\distr_i = \pt_i - d$ (each relative distance is positive in case of penetration), and we additionally define the time derivatives $\dotdistl_i = -\vc_i + l\, \omegap_i$ and $\dotdistr_i = \vc_i - l\, \omegap_i$ with respect to the left and right wall, respectively. 
We use the piecewise-linear definition of the contact forces
\begin{equation}
F_i^j = \left\{
\begin{array}{ll}
\kappa \dist_i^j + \nu \dot{\dist}_i^j &\quad \text{if} \;\; \dist_i^j \ge 0 \;\;\text{and}\;\; \kappa \dist_i^j + \nu \dot{\dist}_i^j \ge 0, \\
0 &\quad \text{otherwise},
\end{array}
\right. \label{eq:contact}
\end{equation}
for each of the soft walls $j\in\{\mathrm{l}, \mathrm{r}\}$, where $\kappa=100$ is the stiffness and $\nu = 10$ is the damping in the contact model. As described further in~\cite{Marcucci2021}, two binary indicator variables can be used to model each of the piecewise-linear functions in~\eqref{eq:contact}, resulting in a total of $n_{\mathrm{bin}} = 4 N$ binary optimization variables in the considered MIOCP formulation.

\subsubsection{MIQP Solver: Simulation Results on dSPACE Scalexio Hardware}

We illustrate the performance of the embedded \software{BB-ASIPM} solver using closed-loop simulations on the dSPACE Scalexio rapid prototyping unit, \change{see Fig.~\ref{fig:scalexio},} where the MI-MPC controller solves the MIOCP at each control time step with a sampling time period $\Ts^{\mathrm{mpc}}=0.1$~s and $N=8$ control intervals. The cart-pole system is simulated using the linearized dynamics in~\eqref{eq:dyn_case2} with a smaller discretization time $\Ts=0.05$~s, and the state feedback includes zero-mean Gaussian sensor noise $w_i \sim \mathcal{N}(0,\sigma_x^2)$ with covariance matrix $\sigma_x^2 = \mathrm{diag}(\sigma_{\pc}^2, \sigma_{\thetap}^2, \sigma_{\vc}^2, \sigma_{\omegap}^2)$ and standard deviation values $\sigma_{\pc} = 10^{-2}$, $\sigma_{\thetap} = 10^{-3}$, $\sigma_{\vc} = 10^{-2}$ and $\sigma_{\omegap} = 10^{-3}$. Figure~\ref{fig:cartpole_traj_N8} shows the resulting closed-loop MI-MPC trajectories for stabilization of the inverted pendulum with soft contacts, with the initial state values $\pc_0 = \change{0.25}$, $\thetap_0 = -0.05$, $\vc_0 = 0$ and $\omegap_0 = 0$, and using the \software{BB-ASIPM} solver on the dSPACE Scalexio rapid prototyping unit. For this particular set of initial state values, it can be seen from Figure~\ref{fig:cartpole_traj_N8} that the tip of the pendulum makes contact with the \change{right} wall before making contact with the \change{left} wall, after which the MI-MPC controller stabilizes the cart-pole system towards the unstable equilibrium at the origin. The behavior of the MI-MPC controller strongly depends on the cost function and the horizon length in the MIOCP but, since the cart-pole system is just an illustrative case study, a thorough tuning process remains outside the scope of the present paper. Figure~\ref{fig:cartpole_traj_N8} also shows the computation times and the number of B\&B iterations at each control time step. Finally, Table~\ref{tab:cartpole_cases} shows the average and worst-case results for the computation times, number of B\&B iterations and total number of IPM iterations to solve the convex QP relaxations in the \software{BB-ASIPM} solver, using four different sets of initial state values. It can be observed that the worst-case computation times of the \software{BB-ASIPM} solver on the dSPACE Scalexio rapid prototyping unit remain below the sampling time period of $\Ts^{\mathrm{mpc}}=0.1$~s.

\begin{figure}[h] 
	\centerline{\hbox{
			\includegraphics[trim={2cm 1.5cm 1cm 0},clip,width=0.85\textwidth]{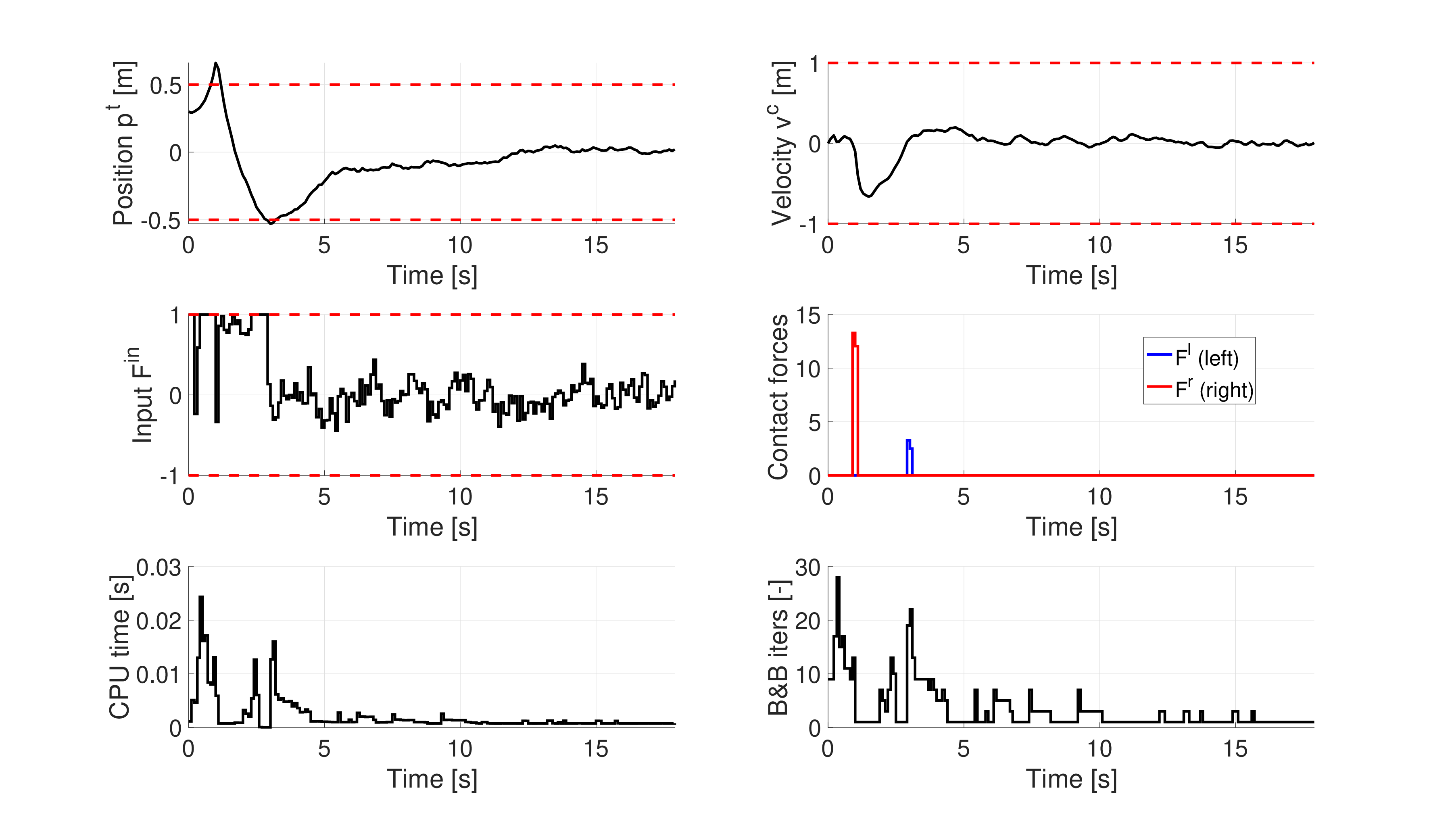}}}
\caption[]{Closed-loop MI-MPC trajectories of simulations for stabilization of the inverted pendulum with soft contacts, with initial state values $\pc_0 = \change{0.25}$ and $\thetap_0 = -0.05$, and using the \software{BB-ASIPM} solver on dSPACE Scalexio rapid prototyping unit. The red dashed lines in the upper left plot indicate the positions at which the tip of the pole penetrates the soft walls, i.e., when $\pt_i \le -d$ or $\pt_i \ge d$, \change{corresponding} to times when the contact forces in~\eqref{eq:contact} can be nonzero from the left or right wall, respectively.}
\label{fig:cartpole_traj_N8}
\end{figure}

\begin{table}[h]
	\caption{Average and worst-case computational results of the \software{BB-ASIPM} solver using four different MI-MPC simulations for stabilization of the inverted pendulum with soft contacts on the dSPACE Scalexio rapid prototyping unit.}
	\label{tab:cartpole_cases}
	\centering
	\setlength{\tabcolsep}{1.1em}
	\begin{tabular}{ l l | c c | c c | c c }
		\toprule
		 \multicolumn{2}{c}{Initial state value $\hat{x}_0$} & \multicolumn{2}{c}{CPU time~[ms]} & \multicolumn{2}{c}{B\&B iters~[-]} & \multicolumn{2}{c}{IPM iters~[-]} \\
		& & mean & max & mean & max & mean & max \\
		\midrule
$\pc_0 = 0.25$ & $\thetap_0 = -0.05$  &  \change{$2.1$~ms}  &  \change{$24.4$~ms}  &  \change{$3.2$}  &  \change{$28$}  &  \change{$39.2$}  &  \change{$569$}  \\ 
$\pc_0 = 0.00$ & $\thetap_0 = -0.05$  &  \change{$2.3$~ms}  &  \change{$53.7$~ms}  &  \change{$3.2$}  &  \change{$48$}  &  \change{$43.4$}  &  \change{$1223$}  \\ 
$\pc_0 = 0.00$ & $\thetap_0 = 0.05$  &  \change{$2.3$~ms}  &  \change{$59.5$~ms}  &  \change{$2.9$}  &  \change{$53$}  &  \change{$46.7$}  &  \change{$1406$}  \\ 
$\pc_0 = -0.25$ & $\thetap_0 = 0.05$  &  \change{$1.8$~ms}  &  \change{$27.8$~ms}  &  \change{$2.4$}  &  \change{$25$}  &  \change{$34.4$}  &  \change{$693$}  \\ 
		\bottomrule
	\end{tabular}
\end{table}

\section{Conclusions and Outlook} \label{sec:concl}

We proposed a solver for mixed-integer optimal control problems aimed at achieving a real-time implementation of mixed-integer model predictive control~(MI-MPC) on embedded platforms based on tailored presolve methods, both exact techniques and heuristics, and using a tailored solver for convex relaxations. In particular, we
provided an overview of recent work on the efficient implementation of infeasibility detection and early termination of an active-set based interior point method~(ASIPM) for convex quadratic programming. We then proposed a novel collection of block-sparse presolve techniques to efficiently remove decision variables, and to remove or tighten inequality constraints in mixed-integer quadratic programming~(MIQP).
In addition, we showed how the tailored presolve routine can be used in a novel iterative heuristic approach to compute feasible but possibly suboptimal \change{MIQP} solutions. Based on a self-contained C~code implementation of a branch-and-bound~(B\&B) method, in combination with these tailored presolve techniques and ASIPM, the resulting \software{BB-ASIPM} solver can be used to implement MI-MPC on embedded microprocessors. We presented benchmarking results for the \software{BB-ASIPM} algorithm compared against state-of-the-art MIQP solvers, including \software{GUROBI}, \software{MOSEK}, \software{GLPK}, \software{Cbc}, and Matlab's \software{intlinprog}, based on a case study of mobile robot motion planning with obstacle avoidance constraints. Finally, we demonstrated the computational performance of the \software{BB-ASIPM} solver on a dSPACE Scalexio rapid prototyping unit, using a second case study of stabilization for an underactuated cart-pole with soft contacts. 
Future work involves the validation of the \software{BB-ASIPM} solver in real-world experiments \change{using the proposed MI-MPC method for online trajectory generation, e.g., in combination with a standard MPC controller for reference tracking}.
	

\change{
\paragraph*{Data Availability Statement}
The data that support the findings of this study are available from the corresponding author upon reasonable request.
}

\bibliography{IEEEabrv,references}

\end{document}